	\newtheoremstyle{slanted}
	{}
	{}
	{\slshape}
	{}
	{\bfseries}
	{.}
	{ }
	{}
	\theoremstyle{slanted}
	\newtheorem{theo}{Theorem}[section]
	\newtheorem{prop}[theo]{Proposition}
	\newtheorem{conjecture}[theo]{Conjecture}
	\newtheorem{question}[theo]{Question}
	\newtheorem{lemma}[theo]{Lemma}
	\newtheorem{definition}[theo]{Definition}
	\newtheorem{corollary}[theo]{Corollary}
	\def\egdef{:=}
	\newcommand{\stack}[2]{\array{c}{\scriptstyle #1}\\[-1.1ex]{\scriptstyle #2}\endarray}
	\newcommand{\tend}[3][]{\xrightarrow[#2\to#3]{#1}}
	\newcommand{\EE}{\mathbb{E}}
	\def\ind#1{\mathbbmss{1}_{#1}}
	\newcommand{\ZZ}{\mathbb{Z}}
	\newcommand{\NN}{\mathbb{N}}
	\newcommand{\NNS}{\mathbb{N^*}}
	\newcommand{\PP}{\mathbb{P}}
	\newcommand{\B}{\mathscr{B}}
	\newcommand{\mob}{\boldsymbol{\mu}}
	\newcommand{\om}{\underline{\omega} }
	\newcommand{\nub}{\nu_\B}
	\newcommand{\nup}{\nu'}
	\title[Dynamical Point of View on $B$-free Numbers]{A Dynamical Point of View on the Set of $\B$-free Integers}
	\author{El Houcein El Abdalaoui}
	\author{Mariusz Lema\'nczyk}
	\author{Thierry de la Rue}
	\address{El Houcein El Abdalaoui, Thierry de la Rue:
	Laboratoire de Math\'ematiques Rapha\"el Salem,
	Normandie Université, Universit\'e de Rouen, CNRS --
	Avenue de l'Universit\'e --
	76801 Saint \'Etienne du Rouvray, France.}
	\email{elhoucein.elabdalaoui@univ-rouen.fr\\Thierry.de-la-Rue@univ-rouen.fr}
	\address{Mariusz Lema\'nczyk: Faculty of Mathematics and Computer Science, Nicolaus Copernicus University, 12/18 Chopin street, 87-100 Toru\'{n}, Poland}
	\email{mlem@mat.umk.pl}
	\thanks{Research supported by Science National Center (NCN) grant DEC-2011/03/B/ST1/00407}
	\thanks{The results of the paper have been obtained during the visit of the second author at Rouen University in September 2013.}
\begin{document}

\bibliographystyle{amsplain}

\begin{abstract}
  Sarnak has recently initiated the study of the Möbius function and its square, the characteristic function of square-free integers, from a dynamical point of view, introducing the Möbius flow and the square-free flow as the action of the shift map on the respective subshfits generated by these functions. 
  
  In this paper, we extend the study of the square-free flow to the more general context of $\B$-free integers, that is to say integers with no factor in a given family $\B$ of pairwise relatively prime integers, the sum of whose reciprocals is finite. Relying on dynamical arguments, we prove in particular that the distribution of patterns in the characteristic function of the $\B$-free integers follows a shift-invariant probability measure, and gives rise to a measurable dynamical system isomorphic to a specific minimal rotation on a compact group. 
  As a by-product, we get the abundance of twin $\B$-free numbers. Moreover, we show that the distribution of patterns in small intervals of the form $[N,N+\sqrt{N})$ also conforms to the same measure.
  
  When elements of $\B$ are squares, we introduce a generalization of the Möbius function, and discuss a conjecture of Chowla in this broader context. 
  \end{abstract}

\maketitle

\section{Introduction and overview of the paper}

Throughout the paper, the symbol $\NNS$ denotes the set of natural numbers starting from 1: $\NNS\egdef\{1,2,3,\ldots\}$. For any set $A$, we denote by $|A|$ the cardinality of $A$.

We are interested in arithmetic functions defined on $\NN^*$ and taking values in a subset $\Gamma\subset\{-1,0,1\}$. We view these functions as points $x=(x_n)_n\in\NNS$ in the compact topological space $\Gamma^\NNS$. On this space, we consider the \emph{shift map} $S: x=(x_n)_{n\in\NNS}\mapsto Sx=(x'_n)_{n\in\NNS}$, where for each $n\in\NNS$, $x'_n\egdef x_{n+1}$. 

We say that $x\in \Gamma^\NNS$ is \emph{generic} for some probability measure $\nu$ on $\Gamma^\NNS$ if the frequency of patterns in the sequence $x$ conforms to $\nu$. Formally, this means that for each cylinder set $C\subset \Gamma^\NNS$, we have
\[
 \dfrac{1}{N} \sum_{0\le n<N} \ind{C}(S^nx)\tend{N}{\infty} \nu(C). 
\]
Observe that if $x$ is generic for $\nu$, then $\nu$ must be shift-invariant. 

\subsection{The Möbius flow and the square-free flow}

Recall that the \emph{Möbius function} is the multiplicative arithmetic function $\mob$ defined, for all positive integer $n$, by
\[
  \mob(n) \egdef \begin{cases}
              0 \text{ if $n$ is divisible by the square of a prime number,}\\
              (-1)^{d_n} \text{ otherwise, where $d_n$ is the number of prime factors of $n$.}
            \end{cases}
\]
Its square $\mob^2$ is the characteristic function of the square-free integers.
In spite of their seemingly artificial definitions, these functions are of great importance in number theory for their connections with the distribution of prime numbers and the Riemann zeta function (see \textit{e.g.} \cite{apostol,hildebrand,titchmarsh}). 

Recently, Sarnak~\cite{sarnak} has suggested to study $\mob$ and $\mob^2$ from a dynamical point of view: He introduced the \emph{Möbius flow}  (respectively the \emph{square-free flow}) as the action of the shift map on the closure of the orbit of $\mob$ in $\{-1,0,1\}^\NNS$ (respectively of $\mob^2$ in  $\{0,1\}^\NNS$). Sarnak announced several results concerning the square-free flow: Its topological entropy is $6/\pi^2$ (when the logarithm is taken in base~2), and the sequence $\mob^2$ is generic for a shift-invariant probability measure $\nu_S$ on $\{0,1\}^\NNS$ which has zero Kolmogorov entropy. He also stated important conjectures on the Möbius flow: In particular, he explained that the sequence $\mob$ is expected to be generic for a probability measure $\nu_N$ which can be seen as the ``fully random'' extension of $\nu_S$, and connects this conjecture to a famous conjecture of Chowla~\cite{chowla}. 

\smallskip

Our purpose in the present work is to extend this point of view to a generalized setting, providing simple dynamical proofs of several known results concerning square-free numbers, and extending them to the context of so-called \emph{$\B$-free numbers} (see below). We also consider a generalization of the Möbius function, which we hope can shed a new light upon the Chowla conjecture.

\subsection{$\B$-free numbers}

More generally than the square-free numbers, we consider all natural numbers which have no factors in a given subset $\B=\{b_k:k\ge1\}\subset\{2,3,4,\ldots\}$ satisfying
\begin{equation}
  \label{eq:relatively prime}
  \forall 1\le k< k',\ \text{$b_k$ and $b_{k'}$ are relatively prime,}
\end{equation}
and
\begin{equation}
  \label{eq:finite_sum_of_inverse}
  \sum_{k\ge 1} \dfrac{1}{b_k} < \infty.
\end{equation}
Integers with no factors in $\B$ are called \emph{$\B$-free}. Those integers have been introduced by Erdös~\cite{erdos}, who proved the existence of some $0<c<1$ such that, for all large enough $N$, the interval $[N,N+N^c)$ contains at least one $\B$-free number. A special case of $\B$-free numbers, in which falls the case of square-free numbers, had been previously considered by Mirsky~\cite{mirsky}, who studied the distribution of patterns in the characteristic function of \emph{$r$-free numbers}, that is to say numbers which are not divisible by the $r$-th power of any prime ($r\ge2$). 

We introduce the sequence $\eta=(\eta_n)_{n\in\NNS}$ which is the characteristic function of the set of $\B$-free integers:
\begin{equation}
  \label{eq:def_eta}
  \eta_n\egdef \begin{cases}
                  0 \text{ if there exists $k\ge1$ such that $b_k$ divides $n$,}\\
                  1 \text{ otherwise, \textit{i.e.} if $n$ is $\B$-free.}
                \end{cases}
\end{equation}

We define a subshift $X_\B\subset\{0,1\}^{\NNS}$ (Section~\ref{sec:B-admissible}), and a shift-invariant probability measure $\nub$ on $X_\B$ (Section~\ref{sec:nub}) such that:
\begin{itemize}
  \item The closure of the orbit of $\eta$ under the shift map is $X_\B$ (Corollary~\ref{cor:subshift}).
  \item The topological entropy of the subshift $X_\B$ is positive, equal to $\prod_{k=1}^\infty (1 - 1/b_k)$ (Theorem~\ref{thm:entropy}). This generalizes the formula given by Sarnak~\cite{sarnak} in the context of the square-free flow, a proof of which can be found in~\cite{peckner}.
  \item The sequence $\eta$ is $\nub$-generic (Theorem~\ref{thm:eta_generic}).
  \item The dynamical system $(X_\B,\nub,S)$ has zero Kolmogorov entropy, even discrete spectrum. In fact, we provide an explicit measurable isomorphism between this system and the minimal rotation $T$, which is the addition of $(1,1,\ldots)$ on the compact abelian group $\Omega\egdef\prod_{k\ge1}\ZZ/b_k\ZZ$ (Theorem~\ref{thm:isomorphism}).
\end{itemize}
The last point generalizes the main result of Cellarosi and Sinai~\cite{cs}, concerning the square-free flow, to the context of $\B$-free numbers. But we point out that, while these authors make use of Mirsky's theorem providing the frequency of patterns in the sequence $\mob^2$, our Theorem~\ref{thm:eta_generic} provides an alternative, purely dynamic, proof of Mirsky's result. We also show that the method to prove Theorem~\ref{thm:eta_generic} can be adapted to study the frequency of patterns in $\eta$ along arithmetic subsequences (Theorem~\ref{thm:arithmetic-subsequences}). Moreover, provided an extra condition on $\B$ which is automatically satisfied in the case of $r$-free numbers ($r\ge2$), we prove that the frequency of patterns in $\eta$ in ``short intervals'' of the form $[N, N+\sqrt{N})$ is also given by the measure $\nub$ (Theorem~\ref{thm:short_intervals}). 
The existence of $\B$-free numbers in intervals of the form $[N,N+N^c)$ has been very much studied since the work of Erdös~\cite{erdos}, in which it is conjectured that for any $c>0$, the interval  $[N,N+N^c)$ always contains at least one $\B$-free number for $N$ large enough (see~\cite{wu}, \cite{az} and references therein). 
But, to our knowledge, this is the first time that frequency of patterns inside short intervals is considered.

\subsection{A generalization of the Möbius function}

In the last section of the present paper, we propose to study a generalization of the Möbius function. We assume here that all integers $b_k\in\B$ are squares: $b_k=a_k^2$, where the integers $a_k$ are pairwise relatively prime. For each positive integer $n$, we define $\delta_n$ as the number of integers $a_k$ dividing $n$, and consider the sequence $\pi=(\pi_n)_{n\in\NNS}$ defined by $\pi_n\egdef (-1)^{\delta_n}$. Finally, we define $\mu=(\mu_n)_{n\in\NNS}$, where $\mu_n\egdef \eta_n\cdot\pi_n$. Observe that, if the set $\{a_k: k\ge1\}$ is the set of prime numbers, we recover in this way the Möbius function: $\mu_n=\mob(n)$. (We therefore refer to this situation as the \emph{Möbius case}.) We introduce an abstract property of the sequence $\mu$, called the \emph{Chowla property}, which in the Möbius case corresponds exactly to the Chowla conjecture. We show that this Chowla property is equivalent to the genericity of $\mu$ for a shift-invariant probability measure $\nu_M$ on $\{-1,0,1\}^\NNS$, which is the 
``completely random'' extension 
of $\nu_\B$ (Theorem~\ref{thm:chowla and genericity}). Moreover, this property is satisfied as soon as $\pi$ is generic for the Bernoulli measure $\beta\egdef (1/2,1/2)^{\otimes\NNS}$ on $\{-1,1\}^\NNS$.

An important parameter in our study is $\Sigma\egdef\sum_{k\ge1}1/a_k$. When $\Sigma<\infty$, we prove that $\pi$ is indeed generic, however the shift-invariant measure $\nu'$ which is obtained has zero Kolmogorov entropy (Theorem~\ref{thm:pi_generic}). In the Möbius case, we have $\Sigma=\infty$, and we are not able to say whether $\pi$ is generic for some measure or not. However, we show that, as $\Sigma\to\infty$, the probability measure $\nu'$ for which $\pi$ is generic converges weakly to the Bernoulli measure $\beta$ (Theorem~\ref{thm:Sigma_to_infty}).

\section{The $\B$-free flow}

In this section, we introduce the study of the sequence $\eta$, which is the characteristic function of the set of $\B$-free integers, from a dynamical point of view. We work here in the space $X\egdef\{0,1\}^\NNS$. 

\subsection{A dynamical system which outputs $\eta$}

The starting point of our argument is the following simple fact:
We can observe $\eta$ along the orbit of a particular point in a specific dynamical system. 
Indeed, consider the compact additive group 
\[
  \Omega \egdef \prod_{k\ge 1} \ZZ / b_k\ZZ,
\]
and the transformation $T$ of $\Omega$ mapping $\omega=(\omega_k)_{k\ge1}$ to $T\omega\egdef(\omega_k+1)_{k\ge1}$. 
Then we can get our sequence $\eta$ by observing the function $f:\ \Omega\to\{0,1\}$ defined by
\[
  f(\omega)\egdef \begin{cases}
                     0 \text{ if there exists $k\ge1$ such that $\omega_k=0$,}\\
                     1 \text{ otherwise.}
                  \end{cases}
\]
along the orbit of the element $\om\egdef (0,0,0,\ldots)\in\Omega$:
\begin{equation}
  \label{eq:output_eta}
  \forall n\in\NNS,\quad \eta_n = f(T^n\om).
\end{equation}

More generally, it will be useful to introduce the map $\varphi:\ \Omega\to X$ defined by
\begin{equation}
  \label{eq:def_phi}
  \varphi(\omega) \egdef \Bigl( f(T^n\omega)\Bigr)_{n\in\NNS},
\end{equation}
so that $\eta=\varphi(\om)$.
Note that $\varphi(\omega)$ is the sequence $(x_n)_{n\in\NNS}\in X$ satisfying
\begin{equation}
 \label{eq:im_phi}
 \forall n\in\NNS,\ x_n=\begin{cases}
                            0\text{ if }\exists k\ge1:\ \omega_k+n=0\ [b_k],\\
                            1\text{ if }\forall k\ge1:\ \omega_k+n\neq 0\ [b_k].
                         \end{cases}
\end{equation}

By construction, we have
$S\circ \varphi = \varphi\circ T$ (indeed,
$(S\varphi(\omega))_n=\varphi(\omega)_{n+1}=(\varphi(T\omega))_n$). However, $\varphi$ is not a topological factor map because it is not continuous
(\textit{e.g.}, if $2\notin\B$,
$\varphi(1,1,\ldots,1,1,\ldots)_1=
1\neq0=\varphi(1,1,\ldots,1,-1,-1,\ldots)_1$).

\subsection{Admissible sequences}
\label{sec:B-admissible}
Let $A$ be a subset of $\NNS$. For each $b\ge 1$, we denote by $t(A,b)$ the number of classes modulo $b$ in $A$:
\[ t(A,b) \egdef \left|\bigl\{z\in\ZZ/b\ZZ: \exists n\in A, n=z\ [b]\bigr\}\right|. \]
\begin{definition}
A subset $A\subset\NNS$ is said to be \emph{$\B$-admissible}  if 
\begin{equation}
  \label{eq:def-admissible}
\forall k\ge 1,\ t(A,b_k)<b_k. 
\end{equation}

An infinite sequence $x=(x_n)_{n\in\NNS}\in X$ is said to be \emph{$\B$-admissible} if its \emph{support}
$\{n\in\NNS: x_n=1\}$ is $\B$-admissible. In the same way, a finite block $x_1\ldots x_N\in\{0,1\}^N$ is \emph{$\B$-admissible} if 
$\{n\in\{1,\ldots,N\}: x_n=1\}$ is $\B$-admissible.
\end{definition}

We denote by $X_\B$ the set of all $\B$-admissible sequences in $X$. Since a set
is $\B$-admissible if and only if each of its finite subsets is $\B$-admissible, and a translation of a $\B$-admissible set is $\B$-admissible,   $X_\B$ is a closed and shift-invariant subset of $X$, \textit{i.e.} a subshift.

\begin{prop}
\label{prop:admissible}
  For any $\omega\in\Omega$, $\varphi(\omega)$ is a $\B$-admissible sequence. In particular, $\eta$ is $\B$-admissible.
\end{prop}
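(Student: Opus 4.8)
The plan is to simply unwind the definitions; there is essentially no obstacle here, the statement being a direct reformulation of what it means for $\varphi(\omega)$ to miss a residue class. Fix $\omega\in\Omega$ and put $x\egdef\varphi(\omega)$, with support $A\egdef\{n\in\NNS:\ x_n=1\}$. By the description~\eqref{eq:im_phi} of the image of $\varphi$, an integer $n$ belongs to $A$ if and only if $\omega_k+n\neq 0\ [b_k]$ for every $k\ge1$. Now fix $k\ge1$. Then for every $n\in A$ we have $n\not\equiv -\omega_k\ [b_k]$, so the residue class of $-\omega_k$ in $\ZZ/b_k\ZZ$ is never represented by an element of $A$. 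Consequently the number of classes modulo $b_k$ met by $A$ satisfies $t(A,b_k)\le b_k-1<b_k$, which is exactly condition~\eqref{eq:def-admissible}.

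Since this holds for every $k\ge1$, the set $A$ is $\B$-admissible, i.e.\ $\varphi(\omega)$ is a $\B$-admissible sequence. Finally, applying this to $\omega=\om=(0,0,0,\ldots)$ and recalling from~\eqref{eq:output_eta} and~\eqref{eq:def_phi} that $\eta=\varphi(\om)$, we conclude that $\eta$ is $\B$-admissible. The only thing worth emphasizing in the write-up is that the single "forbidden" class $-\omega_k\bmod b_k$ is what furnishes the strict inequality $t(A,b_k)<b_k$; everything else is bookkeeping.
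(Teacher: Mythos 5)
Your proof is correct and is exactly the paper's argument: the support $A$ of $\varphi(\omega)$ misses the residue class $-\omega_k$ modulo $b_k$, hence $t(A,b_k)\le b_k-1<b_k$ for every $k$. The paper states this in a single line; you have merely written out the same observation in full detail.
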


\begin{proof}
  Let $\omega\in\Omega$, and denote by $A$ the support of $\varphi(\omega)$. Then for each $k\ge1$, we have $-\omega_k\not \in \{ n\mod b_k: n\in A\}$. 
\end{proof}

\subsection{A shift-invariant measure on $X_\B$}
\label{sec:nub}
Since the integers $b_k$ are pairwise relatively prime, the topological dynamical system $(\Omega,T)$ is a minimal rotation, hence also uniquely ergodic, and its unique ergodic invariant probability measure is the normalized Haar measure on $\Omega$, which we denote by $\PP$. Under $\PP$, the coordinates $\omega_k$, $k\ge 1$ are independent, each of them being uniformly distributed in the corresponding finite group $\ZZ/b_k\ZZ$. 

Let $\nub$ be the pushforward measure of $\PP$ to $X$ by the map $\varphi$: For each measurable subset $C\subset X$,
$\nub(C)\egdef \PP(\varphi^{-1}C)$. Then $\nub$ is shift-invariant, and by Proposition~\ref{prop:admissible}, $\nub$ is concentrated on $X_\B$. Moreover, the measurable dynamical system $(X_\B,\nub,S)$ is a factor of $(\Omega,\PP,T)$. In particular, it is ergodic.

We shall need in the sequel the following lemma, valid in the context of any probability space $(\Omega,\PP)$.

\begin{lemma}
\label{lemma:general}
  In the probability space $(\Omega,\PP)$, let $(E_n)_{n\in\NNS}$ be a sequence of events, and for each $n$, let $F_n\egdef \Omega\setminus E_n$. Then, for any finite disjoint subsets $A,B\subset\NNS$, we have
  \[ \PP  \left(   \bigcap_{n\in A}E_n  \cap \bigcap_{m\in B}F_m\right) = \sum_{A\subset D\subset A\cup B} (-1)^{|D\setminus A|}\ \PP  \left(   \bigcap_{d\in D}E_d \right)   . \]
\end{lemma}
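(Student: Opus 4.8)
The plan is to prove the identity by inclusion–exclusion, expanding the indicator functions of the complements $F_m$ and then taking expectations. Concretely, I would start from the pointwise identity of random variables
\[
\mathbbmss{1}_{\bigcap_{n\in A}E_n}\cdot\prod_{m\in B}\mathbbmss{1}_{F_m}
= \prod_{n\in A}\mathbbmss{1}_{E_n}\cdot\prod_{m\in B}\bigl(1-\mathbbmss{1}_{E_m}\bigr),
\]
which holds because $F_m=\Omega\setminus E_m$ so $\mathbbmss{1}_{F_m}=1-\mathbbmss{1}_{E_m}$. Expanding the product over $m\in B$ by distributivity, every term of the expansion corresponds to a choice of a subset $C\subset B$ of indices for which we pick the factor $-\mathbbmss{1}_{E_m}$ (and the factor $1$ for the remaining indices of $B$), contributing the sign $(-1)^{|C|}$ and the product $\prod_{m\in C}\mathbbmss{1}_{E_m}$. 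Setting $D\egdef A\cup C$, and using that $A$ and $B$ are disjoint so that $D\setminus A = C$ and the correspondence $C\leftrightarrow D$ between subsets of $B$ and sets with $A\subset D\subset A\cup B$ is a bijection, we get
\[
\mathbbmss{1}_{\bigcap_{n\in A}E_n\,\cap\,\bigcap_{m\in B}F_m}
= \sum_{A\subset D\subset A\cup B}(-1)^{|D\setminus A|}\prod_{d\in D}\mathbbmss{1}_{E_d}
= \sum_{A\subset D\subset A\cup B}(-1)^{|D\setminus A|}\,\mathbbmss{1}_{\bigcap_{d\in D}E_d}.
\]

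Then I would simply integrate both sides against $\PP$: by linearity of expectation (the sum is finite since $A$ and $B$ are finite), $\esp{\mathbbmss{1}_{G}}=\PP(G)$ for any event $G$, which yields exactly
\[
\PP\left(\bigcap_{n\in A}E_n\cap\bigcap_{m\in B}F_m\right)
= \sum_{A\subset D\subset A\cup B}(-1)^{|D\setminus A|}\,\PP\left(\bigcap_{d\in D}E_d\right),
\]
with the convention that the empty intersection is $\Omega$, so that when $D=A=\emptyset$ the term is $\PP(\Omega)=1$. This is the claimed formula.

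There is essentially no obstacle here: the statement is a routine finite inclusion–exclusion, and the only points requiring a modicum of care are (i) the bijection between subsets $C$ of $B$ and sets $D$ with $A\subset D\subset A\cup B$, which uses the disjointness of $A$ and $B$ in an essential way (otherwise $D\setminus A$ would not determine $C$), and (ii) handling the degenerate cases $A=\emptyset$ or $B=\emptyset$ via the empty-product/empty-intersection conventions. One could alternatively phrase the argument purely at the level of sets using the identity $F_m = \Omega\setminus E_m$ and iterating the elementary relation $\PP(H\cap F_m)=\PP(H)-\PP(H\cap E_m)$ over $m\in B$, but the indicator-function computation is cleaner and makes the bookkeeping of signs transparent.
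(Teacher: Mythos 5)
Your proof is correct and follows essentially the same route as the paper: both expand $\ind{F_m}=1-\ind{E_m}$, distribute the product over $m\in B$, reindex the resulting subsets $C\subset B$ as $D=A\cup C$ with $A\subset D\subset A\cup B$, and conclude by taking expectations. The paper merely performs the expansion directly under the expectation sign rather than at the level of indicator functions first, which is an immaterial difference.
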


\begin{proof}
Denoting by $\EE[\,\cdot\,]$ the expectation with respect to the probability $\PP$, we have
 \begin{align*}
  \PP  \left(   \bigcap_{n\in A}E_n  \cap \bigcap_{m\in B}F_m\right)
    & = \EE \left[ \prod_{n\in A}\ind{E_n} \prod_{m\in B}\left(1-\ind{E_m}\right) \right] \\
    & = \sum_{C\subset B} (-1)^{|C|}\ \EE \left[ \prod_{n\in A\cup C}\ind{E_n} \right] \\
    & = \sum_{A\subset D\subset A\cup B} (-1)^{|D\setminus A|}\ \PP  \left(   \bigcap_{d\in D}E_d \right).
\end{align*}
\end{proof}

Now we want to compute the values taken by $\nub$ on cylinder sets in $X$. For any finite disjoint subsets $A,B\subset \NNS$, we denote by $C_{A,B}$ the cylinder set
\[  C_{A,B} \egdef \left\{ x=(x_n)_{n\in\NNS}\in X: \forall n\in A, x_n=1, \forall m\in B, x_m=0 \right\}.\]
When $B$ is empty, we rather write $C_A^1$ for $C_{A,\emptyset}$. Analogously, we write $C_B^0$ for $C_{\emptyset,B}$.

\begin{prop}
  \label{prop:measure_of_cylinders}
  For any finite disjoint subsets $A,B\subset \NNS$, we have 
  \begin{equation}
    \label{eq:nus_CA}
    \nub(C_A^1) = \prod_{k\ge1}\left( 1 - \dfrac{t(A,b_k)}{b_k}\right),
  \end{equation}
  and
  \begin{equation}
    \label{eq:nus_CAB}
    \nub(C_{A,B}) = \sum_{A\subset D\subset A\cup B} (-1)^{|D\setminus A|} \prod_{k\ge1}\left( 1 - \dfrac{t(D,b_k)}{b_k}\right).
  \end{equation}
\end{prop}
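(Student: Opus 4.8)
The plan is to compute $\nub(C_A^1)$ first, and then deduce the general formula for $\nub(C_{A,B})$ by an inclusion-exclusion argument, invoking Lemma~\ref{lemma:general}.

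For the first formula, recall that $\nub$ is the pushforward of the Haar measure $\PP$ under $\varphi$, so $\nub(C_A^1) = \PP(\varphi^{-1}C_A^1)$. Using the description \eqref{eq:im_phi} of $\varphi(\omega)$, a point $\omega=(\omega_k)_{k\ge1}\in\Omega$ lies in $\varphi^{-1}C_A^1$ if and only if for every $n\in A$ and every $k\ge1$ we have $\omega_k+n\neq 0\ [b_k]$, equivalently $\omega_k\notin\{-n\bmod b_k : n\in A\}$ for every $k$. For a fixed $k$, the forbidden set of residues for $\omega_k$ is exactly $\{-n \bmod b_k : n\in A\}$, which has cardinality $t(A,b_k)$ (translation by $-1$ is a bijection of $\ZZ/b_k\ZZ$, so it does not change the number of classes). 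Since under $\PP$ the coordinates $\omega_k$ are independent and each $\omega_k$ is uniform on $\ZZ/b_k\ZZ$, the event for index $k$ has probability $1 - t(A,b_k)/b_k$, and multiplying over $k$ gives \eqref{eq:nus_CA}. One technical point worth a sentence: the product is really an infinite product, so one should note it is a decreasing limit of the probabilities of the events $\bigcap_{k\le K}\{\omega_k+n\neq 0\ [b_k]\ \forall n\in A\}$, which converges by continuity of measure (and is strictly positive because $\sum 1/b_k<\infty$ forces $\sum t(A,b_k)/b_k<\infty$, though positivity is not needed here).

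For the second formula, apply Lemma~\ref{lemma:general} in the probability space $(X,\nub)$ — or equivalently pull everything back to $(\Omega,\PP)$ — with the events $E_n\egdef C_{\{n\}}^1 = \{x : x_n=1\}$, so that $F_n = \{x : x_n=0\}$. Then $C_{A,B} = \bigcap_{n\in A}E_n\cap\bigcap_{m\in B}F_m$, and the lemma gives
\[
  \nub(C_{A,B}) = \sum_{A\subset D\subset A\cup B} (-1)^{|D\setminus A|}\ \nub\Bigl(\bigcap_{d\in D}E_d\Bigr).
\]
Since $\bigcap_{d\in D}E_d = C_D^1$, substituting \eqref{eq:nus_CA} yields \eqref{eq:nus_CAB}. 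The only genuine content is the first formula; the second is a formal consequence, so I do not expect a real obstacle there. The main point requiring a little care is the handling of the infinite product in the first part — making sure the independence-and-uniformity argument is correctly applied to infinitely many coordinates via a limit — but this is routine given that $(\Omega,\PP)$ is the product of the uniform measures on the $\ZZ/b_k\ZZ$.
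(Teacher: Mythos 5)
Your proof is correct and follows essentially the same route as the paper: compute $\PP(\varphi^{-1}C_A^1)$ using independence and uniformity of the coordinates $\omega_k$ under the Haar measure, then derive \eqref{eq:nus_CAB} from \eqref{eq:nus_CA} via Lemma~\ref{lemma:general} applied to the events $E_n=\{x:x_n=1\}$. Your added remarks on the cardinality of $\{-n\bmod b_k:n\in A\}$ and on passing to the infinite product by continuity of measure only make explicit what the paper leaves implicit.
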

\begin{proof}
Let $A$ be a finite subset of $\NNS$. 
First notice that for a fixed $k\geq 1$,
\[
\PP\bigl\{\omega\in\Omega: \forall n\in A,\ \omega_k+n\neq 0\, [b_k]\bigr\} = 1-\frac{t(A,b_k)}{b_k}.
\]
Then, we have
  \begin{align*}
    \nub(C_A^1) 
    & = \PP\left( \varphi^{-1}(C_A^1)\right)\\
    & = \PP\left( \bigcap_{k\ge1} \bigl\{\omega\in\Omega: \forall n\in A, \omega_k+n\neq 0 \, [b_k]\bigr\}\right)\\
    & = \prod_{k\ge 1} \left( 1 - \dfrac{t(A,b_k)}{b_k}\right),
  \end{align*}
which proves~\eqref{eq:nus_CA}. Now, observe that~\eqref{eq:nus_CAB} can be derived from~\eqref{eq:nus_CA} using Lemma~\ref{lemma:general}.
\end{proof}

\begin{prop}
  For  any finite disjoint subsets $A,B\subset \NNS$, the following properties are equivalent:
  \begin{enumerate}[(i)]
    \item $\nub(C_{A,B})>0$;
    \item $\nub(C_{A}^1)>0$;
    \item $A$ is a $\B$-admissible set.
  \end{enumerate}
\end{prop}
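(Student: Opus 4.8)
The plan is to establish the cycle of implications (i) $\Rightarrow$ (ii) $\Rightarrow$ (iii) $\Rightarrow$ (i). The implication (i) $\Rightarrow$ (ii) is immediate: $C_{A,B}\subset C_A^1$, so by monotonicity of $\nub$ we get $\nub(C_A^1)\ge\nub(C_{A,B})$. For (ii) $\Rightarrow$ (iii) I would argue contrapositively from formula~\eqref{eq:nus_CA}: if $A$ is not $\B$-admissible, then $t(A,b_{k_0})=b_{k_0}$ for some $k_0$, so the factor indexed by $k_0$ in the product $\prod_{k\ge1}(1-t(A,b_k)/b_k)$ vanishes and $\nub(C_A^1)=0$. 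It is worth recording here the companion fact needed below: when $A$ \emph{is} $\B$-admissible every factor is strictly positive, and since $0\le t(A,b_k)/b_k\le|A|/b_k$ with $\sum_k 1/b_k<\infty$ by~\eqref{eq:finite_sum_of_inverse}, the infinite product converges to a strictly positive number, so $\nub(C_A^1)>0$.

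The real content is (iii) $\Rightarrow$ (i). Assuming $A$ is $\B$-admissible, I will produce an event $E\subset\Omega$ with $\PP(E)>0$ and $E\subset\varphi^{-1}(C_{A,B})$; since $\nub(C_{A,B})=\PP(\varphi^{-1}(C_{A,B}))$ this finishes the proof. For each $k$ set $R_k\egdef\{z\in\ZZ/b_k\ZZ:\ z\neq -n\ [b_k]\ \text{for all }n\in A\}$, which is nonempty because $|R_k|=b_k-t(A,b_k)\ge1$ by $\B$-admissibility. Call an index $k$ \emph{$m$-good}, for $m\in B$, if $-m\ [b_k]\in R_k$; equivalently, if $b_k$ divides none of the integers $m-n$ with $n\in A$, all of which are nonzero since $A\cap B=\emptyset$. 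A fixed nonzero integer is divisible by only finitely many of the pairwise coprime $b_k$, so for each $m\in B$ the set of $m$-good indices is cofinite, in particular infinite. Hence, writing $B=\{m_1,\dots,m_\ell\}$, one can choose pairwise distinct indices $k_1,\dots,k_\ell$ with $k_j$ being $m_j$-good, picking them successively (at each step there remains an $m_j$-good index outside the finite set already used).

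Let $E$ be the event that $\omega_{k_j}=-m_j\ [b_{k_j}]$ for $j=1,\dots,\ell$, and $\omega_k\in R_k$ for every other $k$. Checking $E\subset\varphi^{-1}(C_{A,B})$ via~\eqref{eq:im_phi} is routine: for $n\in A$, every coordinate $\omega_k$ avoids $-n\ [b_k]$ — including the prescribed values $-m_j\ [b_{k_j}]$, which lie in $R_{k_j}$ by $m_j$-goodness — so $x_n=1$; and for each $m_j$ the coordinate $k_j$ forces $x_{m_j}=0$. Finally, by independence and uniformity of the coordinates under $\PP$,
\[
\PP(E)=\Bigl(\prod_{j=1}^\ell\frac1{b_{k_j}}\Bigr)\prod_{k\notin\{k_1,\dots,k_\ell\}}\frac{|R_k|}{b_k}=\nub(C_A^1)\prod_{j=1}^\ell\frac1{|R_{k_j}|}>0,
\]
the last equality by comparison with~\eqref{eq:nus_CA} and the positivity recorded in the first paragraph.

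The step I expect to be delicate is this last one: reconciling the "killing" conditions imposed by $B$ (each $m$ must be hit by some modulus) with the "survival" conditions imposed by $A$ (no $n$ may be hit by any modulus). This is exactly what the notion of $m$-goodness and the selection of pairwise distinct witness indices $k_j$ are designed to handle, and it is here that one uses that $\B$ is infinite together with pairwise coprimality of its elements. The two other implications are bookkeeping with formula~\eqref{eq:nus_CA} and elementary facts about convergent infinite products.
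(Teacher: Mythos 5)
Your proof is correct and follows essentially the same route as the paper: the equivalence of (ii) and (iii) via formula~\eqref{eq:nus_CA} together with the convergence of the infinite product, and, for the hard implication, the assignment to each $m_j\in B$ of a dedicated large modulus $b_{k_j}$ forced to satisfy $\omega_{k_j}=-m_j\ [b_{k_j}]$ while chosen so as not to annihilate any element of $A$ (your ``$m$-goodness'' is exactly the paper's choice of $K$ with $n\neq m\ [b_{K+j}]$ for $n\in A$, $m\in B$). The only cosmetic difference is that you compute $\PP(E)$ explicitly, whereas the paper bounds it below by writing the event as an intersection of two independent events, one containing $\varphi^{-1}(C_A^1)$.
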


\begin{proof}
  By~\eqref{eq:nus_CA} and ~\eqref{eq:finite_sum_of_inverse}, (ii) is equivalent to $t(A,b_k)<b_k$ for all $k\ge 1$, which is the definition of a $\B$-admissible set. Obviously, (i) implies (ii), and it only remains to show that (ii)$\Longrightarrow$(i). 

Assume that $\nub(C_{A}^1)>0$.
  We want to prove that $\nub(C_{A,B})=\PP(\varphi^{-1}C_{A,B})>0$. 
  Since $A$ and $B$ are disjoint, and since $b_k\tend{k}{\infty}\infty$ by~\eqref{eq:finite_sum_of_inverse}, there exists $K$ such that, for all $j\ge 1$, the equation 
  $n=m\ [b_{K+j}]$ has no solution with $n\in A$ and $m\in B$. 
  Let $B=\{m_1,\ldots,m_r\}$. Observe that 
\begin{multline*}
    \bigl\{\omega\in\Omega:  \forall 1\le j\le r, \omega_{K+j} + m_j = 0\ [b_{K+j}]\bigr\} \\ 
     \cap \bigl\{\omega\in\Omega:  \forall k\notin\{K+1,\ldots,K+r\},  \forall n\in A, \omega_k+n\neq 0\ [b_k]\bigr\}
       \subset \varphi^{-1}(C_{A,B}).
   \end{multline*}
But the set on the left-hand side is the intersection of two independent events in $(\Omega,\PP)$, the first one has probability $\prod_{j=1}^r1/b_{K+j}>0$, and the second one contains $\varphi^{-1}(C_{A}^1)$, therefore has positive probability. 
\end{proof}

\begin{corollary}
 The support of $\nub$ is the subshift $X_\B$ of $\B$-admissible sequences.
\end{corollary}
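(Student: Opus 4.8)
The plan is to establish the two inclusions between $\text{supp}(\nub)$ and $X_\B$. The inclusion $\text{supp}(\nub)\subset X_\B$ is essentially already in hand: by the remark following the definition of $\nub$, the measure $\nub$ is concentrated on $X_\B$ (this uses Proposition~\ref{prop:admissible}), and since $X_\B$ is closed, its support — the smallest closed set of full measure — is contained in $X_\B$. So the real content is the reverse inclusion $X_\B\subset\text{supp}(\nub)$.

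For the reverse inclusion, I would argue as follows. Recall that the support of a measure on a space with a countable basis consists exactly of those points every neighborhood of which has positive measure, and that cylinder sets form a basis of the topology of $X=\{0,1\}^\NNS$. So it suffices to show that if $x\in X_\B$, then every basic cylinder neighborhood of $x$ has positive $\nub$-measure. A basic neighborhood of $x$ determined by its first $N$ coordinates is precisely a cylinder of the form $C_{A,B}$, where $A=\{n\le N: x_n=1\}$ and $B=\{n\le N: x_n=0\}$ are finite and disjoint. Now I invoke the Proposition immediately preceding the Corollary: since $x$ is $\B$-admissible, so is the finite set $A$ (any subset of a $\B$-admissible set is $\B$-admissible), hence condition (iii) holds for $A$, and therefore condition (i) gives $\nub(C_{A,B})>0$. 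This shows $x\in\text{supp}(\nub)$.

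Combining the two inclusions yields $\text{supp}(\nub)=X_\B$. I do not expect any serious obstacle here: the argument is a routine unwinding of the definition of support together with the already-proved equivalence (i)$\iff$(iii) in the previous proposition; the only point requiring a word of care is the translation between ``basic neighborhoods of $x$'' and ``cylinders $C_{A,B}$,'' which is immediate from the product topology on $\{0,1\}^\NNS$, and the observation that a finite subset of a $\B$-admissible set is again $\B$-admissible (clear from the definition of $t(A,b)$, which is monotone in $A$).
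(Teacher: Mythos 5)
Your argument is correct and is precisely the intended one: the paper leaves this corollary as an immediate consequence of the preceding proposition, and your unwinding (concentration on the closed set $X_\B$ for one inclusion, positivity of $\nub(C_{A,B})$ for cylinder neighborhoods of admissible points for the other) is exactly that consequence made explicit.
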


\section{The measurable dynamical system  $(X_\B,\nub,S)$}

The purpose of this section is to prove the following result, which provides a complete description of the measurable dynamical system  $(X_\B,\nub,S)$.

\begin{theo}
\label{thm:isomorphism}
   The map $\varphi$ is an isomorphism between the two measurable dynamical systems $(\Omega,\PP,T)$ and $(X_\B,\nub,S)$.
\end{theo}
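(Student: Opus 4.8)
The plan is to show that $\varphi\colon(\Omega,\PP,T)\to(X_\B,\nub,S)$, which we already know is a measure-preserving factor map intertwining $T$ and $S$, is in fact invertible modulo null sets. Since a factor map between probability spaces is a measurable isomorphism precisely when it is essentially injective (equivalently, when the $\sigma$-algebra it pulls back from $X_\B$ is, up to $\PP$-null sets, the whole Borel $\sigma$-algebra of $\Omega$), it suffices to recover $\PP$-almost every coordinate $\omega_k$ from the sequence $\varphi(\omega)$ in a measurable way. The key point is that, reading off $\varphi(\omega)$, the positions $n$ where a zero is "caused by" the $k$-th coordinate are exactly those $n$ with $n\equiv -\omega_k\ [b_k]$; so if one can detect, from the $0$/$1$ pattern alone, the arithmetic progression modulo $b_k$ along which $\varphi(\omega)$ vanishes for the $k$-th reason, then $\omega_k$ is determined.

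Concretely, I would fix $k$ and argue as follows. For $\PP$-a.e.\ $\omega$, and for each residue class $c\in\ZZ/b_k\ZZ$ with $c\neq -\omega_k$, I claim there are infinitely many $n\equiv c\ [b_k]$ with $\varphi(\omega)_n=1$: indeed, the event that $\varphi(\omega)_n=1$ for some $n$ in a prescribed class mod $b_k$ (with the $k$-th constraint automatically satisfied) has positive probability by the computation behind Proposition~\ref{prop:measure_of_cylinders}, using \eqref{eq:finite_sum_of_inverse} to ensure the product over $j\neq k$ of $(1-t(\cdot,b_j)/b_j)$ stays positive; then Borel--Cantelli (second moment / ergodicity along the relevant subgroup rotation, or directly the fact that $(\Omega,\PP,T^{b_k})$ is still ergodic on each coset since the other $b_j$ are coprime to $b_k$) upgrades "positive probability" to "infinitely often, a.s." By contrast, for the class $c=-\omega_k$ we have $\varphi(\omega)_n=0$ for \emph{every} $n$ in that class. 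Hence the unique residue class mod $b_k$ along which $\varphi(\omega)$ is identically $0$ is $-\omega_k$, and this class is clearly a measurable function of $\varphi(\omega)$. Doing this for all $k$ reconstructs $\omega$ (up to a null set) from $\varphi(\omega)$, giving a measurable inverse $\psi$ with $\psi\circ\varphi=\mathrm{Id}$ $\PP$-a.e., which is exactly the assertion that $\varphi$ is an isomorphism of the measure-theoretic systems.

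The main obstacle is the claim that, almost surely, the class $-\omega_k$ is the \emph{only} residue class mod $b_k$ on which $\varphi(\omega)$ vanishes identically — i.e.\ ruling out the "accidental" possibility that along some other class $c$ every $n\equiv c\ [b_k]$ happens to be killed by some $b_j$ with $j\neq k$. This is where condition \eqref{eq:finite_sum_of_inverse} is essential: it forces $b_j\to\infty$ and makes $\sum_{j\neq k}1/b_j<\infty$, so by Borel--Cantelli the set of $n$ that are \emph{not} $\B\setminus\{b_k\}$-free has density $1-\prod_{j\neq k}(1-1/b_j)<1$, and one needs a slightly sharper statement restricted to a single arithmetic progression mod $b_k$; the clean way is to invoke ergodicity of $T^{b_k}$ acting on the coset (or just the already-established ergodicity of $(\Omega,\PP,T)$ applied to the indicator of $\{\omega: \varphi(\omega)_{b_k}=1\}$ composed appropriately) to conclude the frequency of $1$'s along that progression is positive, hence the progression is not all zeros. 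Once this measurable reconstruction is in place, invertibility mod $0$ is immediate and the theorem follows; I would also note that this simultaneously shows $\nub$-a.e.\ point of $X_\B$ lies in the image of $\varphi$, closing the loop.
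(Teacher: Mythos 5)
Your proposal is correct and follows essentially the same route as the paper: recover each $\omega_k$ from $\varphi(\omega)$ by showing that, almost surely, the only residue class mod $b_k$ along which $\varphi(\omega)$ vanishes identically is the one forced by $\omega_k$, using the ergodicity of $T^{b_k}$ on each coset $\{\omega_k=z\}$ (coprimality of the $b_j$) together with the positivity of $\prod_{j\neq k}(1-1/b_j)$ guaranteed by~\eqref{eq:finite_sum_of_inverse}. The paper packages the ergodicity step as a lemma identifying the $T^{b_k}$-invariant $\sigma$-algebra with $\sigma(\omega_k)$ and then bounds $\PP(E_{k,z}\mid\omega_k=z')<1$, which is just a different phrasing of your positive-frequency argument.
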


We already know that $\varphi$ is a measurable factor map from $(\Omega,\PP,T)$ to $(X_\B,\nub,S)$.  Therefore the proof of the theorem reduces to the proof of the following result.
\begin{prop}
  \label{prop:one-to-one}
  There exists $\Omega_0\subset\Omega$, with $\PP(\Omega_0)=1$, such that for each $\omega\in\Omega_0$, the preimage of $\varphi(\omega)$ is reduced to the singleton $\{\omega\}$.
\end{prop}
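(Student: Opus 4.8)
The plan is to recover $\omega$ from $\varphi(\omega)=\eta'$ coordinate by coordinate: for each fixed $k\ge 1$, I want to read off $\omega_k\in\ZZ/b_k\ZZ$ from the positions of the zeros of $\eta'$. The guiding heuristic is that a zero of $\eta'$ at position $n$ means $\omega_j + n \equiv 0 \pmod{b_j}$ for \emph{some} $j$, and if we could be sure that infinitely many of these zeros are ``caused by $b_k$ alone'' (i.e.\ $\omega_k + n \equiv 0 \pmod {b_k}$ but $\omega_j + n \not\equiv 0 \pmod{b_j}$ for all $j\neq k$), then the set of such $n$ would be exactly the arithmetic progression $\{n : n \equiv -\omega_k \ [b_k]\}$ intersected with a set of $\eta'$-zeros, and $\omega_k$ would be determined. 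So the real task is to define a full-measure set $\Omega_0$ on which, for every $k$, this ``$b_k$-caused'' phenomenon happens along a set of positive lower density (or at least along infinitely many $n$ in each residue class mod $b_k$).

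The key steps, in order, would be as follows. First, fix $k$ and consider the event, for a given residue $n_0$ with $n_0 \equiv -\omega_k \ [b_k]$, that there exist arbitrarily large $n \equiv n_0 \ [b_k]$ with $\omega_j + n \not\equiv 0 \ [b_j]$ for all $j \neq k$. By independence of the coordinates under $\PP$ (the $\omega_j$ are independent and uniform on $\ZZ/b_j\ZZ$), the probability that a single such $n$ is ``clean'' is $\prod_{j\neq k}(1 - 1/b_j)$, which is strictly positive by~\eqref{eq:finite_sum_of_inverse}. Second, I would upgrade this to: almost surely, infinitely many $n$ in this progression are clean. This is where one must be careful, because for fixed $\omega$ the events ``$n$ is clean'' over varying $n$ are not independent; but one can use a second-moment / Borel--Cantelli-type argument, or more cleanly invoke the ergodicity of $(\Omega,\PP,T)$ already established: the set $G_k \egdef \{\omega : \omega_j \neq 0 \text{ for all } j \neq k\}$ has $\PP(G_k) = \prod_{j\neq k}(1-1/b_j) > 0$, it is measurable, and by the ergodic theorem (or just minimality plus unique ergodicity of the rotation $T$) $\PP$-a.e.\ orbit $\{T^n\omega\}$ visits $G_k$ with frequency exactly $\PP(G_k)$; intersecting with the progression $n \equiv -\omega_k\ [b_k]$ (along which $(T^n\omega)_k = 0$) still leaves positive frequency. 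Third, having fixed such a full-measure set $\Omega_0^{(k)}$, I observe that on $\Omega_0^{(k)}$ the residue class $-\omega_k \pmod{b_k}$ is the unique residue class mod $b_k$ along which $\eta'$ has infinitely many zeros that are ``isolated from the other moduli'' — more simply, along which $\eta'_n = 0$ with positive density and, crucially, such zeros persist after the other coordinates are ``generic.'' This pins down $\omega_k$ as a measurable function of $\varphi(\omega)$. Finally, set $\Omega_0 \egdef \bigcap_{k\ge 1}\Omega_0^{(k)}$, a countable intersection of full-measure sets, hence $\PP(\Omega_0) = 1$; on $\Omega_0$ every coordinate of $\omega$ is recovered from $\varphi(\omega)$, so $\varphi^{-1}(\varphi(\omega)) = \{\omega\}$.

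The main obstacle is the second step: turning the positive-probability statement ``a given $n$ in the progression is clean'' into the almost-sure statement ``infinitely many $n$ in the progression are clean,'' uniformly usable for all $k$ simultaneously. The naive Borel--Cantelli fails because the clean-events are positively correlated and their probabilities do not sum to infinity in an independent way; the clean fix is to phase it as an ergodic-theorem statement about the rotation $(\Omega,\PP,T)$ acting on the indicator of the (positive-measure) set $G_k \cap \{\omega_k = 0\}$ — wait, that set has measure zero, so one instead works with $G_k$ itself and restricts the time-average to the arithmetic progression $b_k\ZZ + r$, using that the rotation restricted to such a progression is still uniquely ergodic on the subgroup, so the frequency of visits of $\{T^n\omega : n \equiv r\ [b_k]\}$ to $G_k$ is $\PP(G_k) > 0$ for a.e.\ $\omega$. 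That is the delicate point to get right; everything else (independence computations, countable intersection, measurability of the reconstruction map) is routine.
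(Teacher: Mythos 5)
Your overall architecture --- recover each coordinate $\omega_k$ separately from the vanishing pattern of $\varphi(\omega)$ along the $b_k$ arithmetic progressions mod $b_k$, using ergodicity of $T^{b_k}$ on the fibres $\{\omega_k=z\}$ together with $\prod_j(1-1/b_j)>0$, then intersect over $k$ --- is exactly the paper's strategy (the paper packages the fibre ergodicity as Lemma~\ref{lemma:invariant} and applies it to the events $E_{k,z}=\{\omega:\varphi(\omega)\equiv 0\text{ on }\pi_{z,b_k}\}$). But your step 3, the actual reconstruction criterion, is wrong as stated, in two ways. First, ``zeros caused by $b_k$ alone'' is not a property readable from $\varphi(\omega)$: to decide whether a given zero is isolated from the other moduli you already need to know $\omega$, so as a reconstruction procedure this is circular. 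Second, the fallback criterion ``the residue class along which $\eta'_n=0$ with positive density'' does not single out $-\omega_k$: for $z'\neq\omega_k$, the progression $\pi_{z',b_k}$ still meets the zero sets of the other moduli with relative density $1-\prod_{j\neq k}(1-1/b_j)$, which is positive whenever $\B\setminus\{b_k\}\neq\emptyset$, so \emph{every} class mod $b_k$ carries a positive density of zeros. Relatedly, your step 1 aims the ``clean point'' argument at the class $n\equiv-\omega_k\ [b_k]$, which is precisely the class where it tells you nothing new.

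The correct discriminating property is that $\pi_{\omega_k,b_k}$ is ($\PP$-a.s.) the \emph{unique} class mod $b_k$ on which $\varphi(\omega)$ vanishes \emph{identically}, and to prove uniqueness the clean-point machinery must be aimed at the \emph{other} classes: for $n$ with $z'+n=0\ [b_k]$ and $z'\neq\omega_k$, the condition $\omega_k+n\neq 0\ [b_k]$ is automatic, so a clean $n$ (meaning $\omega_j+n\neq 0\ [b_j]$ for all $j\neq k$) satisfies $\varphi(\omega)_n=1$, witnessing that the class of $-z'$ is not identically zero. Your steps 1--2 do deliver such clean points: the ergodic theorem for $T^{b_k}$ applied to $G_k$ gives limiting frequency $\EE[\ind{G_k}\mid\omega_k]=\PP(G_k)>0$ along each progression for a.e.\ $\omega$, and even a single clean point per class suffices. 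With the criterion corrected, your proof closes and is essentially the paper's; the paper reaches the same conclusion more cheaply by combining Lemma~\ref{lemma:invariant} with the one-point bound $\PP(E_{k,z}\mid\omega_k=z')\le\PP\bigl(\varphi(\omega)_n=0\bigr)=1-\nub(C^1_{\{n\}})<1$ for a single $n\in\pi_{z,b_k}$, avoiding any density statement.
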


For $b\ge 1$ and $z\in\ZZ/b\ZZ$, let us denote by $\pi_{z,b}$ the infinite arithmetic subsequence 
\[ \pi_{z,b}\egdef\{n\in\NNS: z+n=0\ [b]\}. \]
Fix $k\ge1$. By construction of $\varphi$, the sequence $\varphi(\omega)$ is equal to zero along the infinite arithmetic subsequence $\pi_{\omega_k,b_k}$. The proof of Proposition~\ref{prop:one-to-one} will consist in showing that, with probability one,  there is no other arithmetic subsequence $\pi_{z,b_{k}}$ with $z\in\ZZ/b_k\ZZ$ along which $\varphi(\omega)$ is equal to zero. It will follow that, with probability one, we can recover the coordinate $\omega_k$ from $\varphi(\omega)$ by observing along which infinite arithmetic subsequence $\pi_{z,b_k}$ the sequence $\varphi(\omega)$ vanishes. 

\begin{lemma}
\label{lemma:invariant}
 For each $k\ge 1$, the $\sigma$-algebra of $T^{b_k}$-invariant sets in $(\Omega,\PP,T)$ coincides modulo $\PP$ with the $\sigma$-algebra generated by the $k$-th coordinate $\omega_k$.
\end{lemma}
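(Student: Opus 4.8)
The plan is to decompose $(\Omega,\PP,T)$ according to the $k$-th coordinate. Write $\Omega = (\ZZ/b_k\ZZ)\times \Omega^{(k)}$, where $\Omega^{(k)}\egdef\prod_{j\neq k}\ZZ/b_j\ZZ$, and under $\PP$ the coordinate $\omega_k$ is independent of the rest. Since the $b_j$ are pairwise relatively prime, $T^{b_k}$ acts on $\Omega$ as the identity on the first factor and as the addition of $(b_k,b_k,\ldots)$ on $\Omega^{(k)}$; because $b_k$ is invertible modulo every $b_j$ with $j\neq k$, this addition is again a minimal rotation on the compact group $\Omega^{(k)}$. Consequently $T^{b_k}$ restricted to each fiber $\{z\}\times\Omega^{(k)}$ is uniquely ergodic, hence ergodic with respect to the Haar measure of $\Omega^{(k)}$. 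This is the technical heart of the argument: the relative primality hypothesis~\eqref{eq:relatively prime} is exactly what guarantees $\gcd(b_k,b_j)=1$ so that the iterate $T^{b_k}$ stays minimal on the complementary coordinates.

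From this fiberwise ergodicity I would deduce the statement about invariant sets. On one hand, any set measurable with respect to $\omega_k$ is clearly $T^{b_k}$-invariant, since $T^{b_k}$ does not change the first coordinate; so the $\sigma$-algebra generated by $\omega_k$ is contained (modulo $\PP$) in the $\sigma$-algebra $\mathscr{I}$ of $T^{b_k}$-invariant sets. For the reverse inclusion, let $E\in\mathscr{I}$ and write $\ind{E}$; for each fixed value $z$ of $\omega_k$, the section $\ind{E}(z,\cdot)$ is a $T^{b_k}$-invariant function on $(\Omega^{(k)},\text{Haar})$, hence $\PP$-a.e.\ constant by ergodicity on the fiber. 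Thus $\ind{E}$ agrees $\PP$-a.e.\ with a function of $\omega_k$ alone, which says precisely that $E$ belongs modulo $\PP$ to the $\sigma$-algebra generated by $\omega_k$. Combining the two inclusions gives the claimed equality of $\sigma$-algebras.

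An alternative, perhaps cleaner, route uses the Fourier/characters picture: the characters of $\Omega$ are indexed by finite combinations of characters of the $\ZZ/b_j\ZZ$, and a character $\chi$ satisfies $\chi\circ T^{b_k}=\chi$ iff $\chi(b_k,b_k,\ldots)=1$, which forces the $j$-components of $\chi$ to vanish for all $j\neq k$ (again using $\gcd(b_k,b_j)=1$), i.e.\ $\chi$ depends only on $\omega_k$. An $L^2$ function is $T^{b_k}$-invariant iff it is supported on such characters, hence is measurable with respect to $\omega_k$; applying this to $\ind{E}$ yields the result. I expect the main obstacle to be purely bookkeeping: making sure the reduction $T^{b_k}|_{\Omega^{(k)}}$ is correctly identified as a minimal rotation and that the conditional/fiberwise ergodicity is invoked with the right version of unique ergodicity, so that the passage from ``sections a.e.\ constant'' to ``$E$ is $\omega_k$-measurable mod $\PP$'' is rigorous.
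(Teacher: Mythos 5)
Your argument is correct and is essentially the paper's own proof: both decompose $\Omega$ along the fibers $\{\omega_k=z\}$, identify the action of $T^{b_k}$ on the complementary coordinates $\prod_{j\neq k}\ZZ/b_j\ZZ$ as a minimal (hence ergodic) rotation via the coprimality of the $b_j$, and conclude that any $T^{b_k}$-invariant set is, modulo $\PP$, a union of fibers. The character-theoretic variant you sketch is a valid alternative but not needed.
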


\begin{proof}
Let us fix $k\ge1$, and consider for any $z\in\ZZ/b_k\ZZ$ the subset
$\Omega_{k,z}\egdef\{\omega\in\Omega:\omega_k=z\}$. Obviously $\Omega_{k,z}$ is invariant under the action of $T^{b_k}$. We also note that, for all $k'\neq k$, since  the integer $b_{k'}$ is relatively prime with $b_k$,  the addition of $b_k$ in $\ZZ/b_{k'}\ZZ$ is isomorphic to the addition of 1. Let $\tilde\Omega\egdef\prod_{k'\neq k'} \ZZ/b_{k'}\ZZ$,  denote by $\tilde T$ the addition of $(1,1,\ldots)$ in $\tilde\Omega$, and by $\tilde\PP$ the normalized Haar measure on $\tilde\Omega$. Then the dynamical system $\bigl(\Omega_{k,z},\PP(\,\cdot\,|\,\Omega_{k,z}),T^{b_k}\bigr)$ is measurably isomorphic to $(\tilde\Omega,\tilde\PP,\tilde T)$. Since the latter is an ergodic rotation, the action of $T^{b_k}$ on $\Omega_{k,z}$ is ergodic. Now, if $A\subset \Omega$ is a $T^{b_k}$-invariant set, by ergodicity, we must have $\PP(A\,|\, \Omega_{k,z})\in\{0,1\}$. Therefore $A$ coincides modulo $\PP$ with the union of those $\Omega_{k,z}$ satisfying $\PP(A\,|\, \Omega_{k,z})=1$.
\end{proof}

\begin{proof}[Proof of Proposition~\ref{prop:one-to-one}]
Fix $k\ge1$ and $z\in\ZZ/b_k\ZZ$. Since $\varphi\circ T^{b_k}=S^{b_k}\circ \varphi$, for any $z\in\ZZ/b_k\ZZ$, the event 
\[
E_{k,z}\egdef\{\omega\in\Omega:\varphi(\omega)=0\text{ along }\pi_{z,b_k}\} 
\]
is $T^{b_k}$-invariant, and contains $\Omega_{k,z}$. Hence, by Lemma~\ref{lemma:invariant}, it coincides $\PP$-almost surely with an $\omega_k$-measurable event, which is of the form $\bigl\{\omega\in\Omega:\omega_k\in\{z_1,\ldots,z_\ell\}\bigr\}$ for some finite subset $\{z_1,\ldots,z_\ell\}\subset \ZZ/b_k\ZZ$. We want to show that this finite subset is reduced to the singleton $\{z\}$. For this, it is enough to prove that, for any $z'\in\ZZ/b_k\ZZ\setminus\{z\}$, $\PP(E_{k,z}|\Omega_{k,z'})=\PP(E_{k,z}|\omega_k=z') <1$. 
Fix some $n\in\pi_{z,b_k}$, and note that $z'+n\neq 0\ [b_k]$. We have
\begin{align*}
 \PP(E_{k,z}|\omega_k=z') 
    & \le \PP\Bigl(\varphi(\omega)_n=0\,\bigr|\,\omega_k=z'\Bigr)\\
    & = \PP\Bigl(\exists k'\neq k: \omega_{k'}+n=0\ [b_{k'}]\,\bigr|\,\omega_k=z'\Bigr) \\
    & = \PP\bigl(\exists k'\neq k: \omega_{k'}+n=0\ [b_{k'}]\bigr) \quad\text{by independence in $(\Omega,\PP)$}\\
    & \le \PP\bigl(\varphi(\omega)_n=0\bigr) \\
    & = 1 - \nub(C^1_{\{n\}}) < 1 \quad\text{by~\eqref{eq:nus_CA}}.
\end{align*}

It follows that 
\[
  \PP\bigl( E_{k,z}\setminus\{\omega\in\Omega:\omega_k=z\} \bigr) = 0.
\]
We now get the proposition by setting
\[
  \Omega_0 \egdef \Omega \setminus \bigcup_{k\ge 1} \bigcup_{z\in\ZZ/b_k\ZZ}
  \bigl( E_{k,z}\setminus\{\omega\in\Omega:\omega_k=z\} \bigr).
  \]
Indeed, notice that for each $\omega\in\Omega$ and each $k\ge1$, we have $\omega\in E_{k,\omega_k}$.
Assume now that $\phi(\omega')=\phi(\omega'')$, but that $\omega'\neq\omega''$. Then there exists $k\ge1$ with
$\omega'_k\neq \omega''_k$. We have $\omega''\in E_{k,\omega''_k}$, but also $\omega'\in E_{k,\omega''_k}$ since $\phi(\omega')=\phi(\omega'')$. Thus $\omega'\in E_{k,\omega''_k}\setminus\{\omega: \omega_k=\omega''_k\}$ and therefore $\omega'\notin\Omega_0$.
\end{proof}

\section{Frequency of blocks in $\eta$}

We already know by Proposition~\ref{prop:admissible} that all finite blocks of 0-s and 1-s appearing in the sequence $\eta$ have to be $\B$-admissible. The following theorem shows that, conversely, any $\B$-admissible finite block does appear in $\eta$, and this with a positive asymptotic frequency given by the probability $\nub$ of the corresponding cylinder. 
\begin{theo}
\label{thm:eta_generic}
The sequence $\eta$ is generic for the probability $\nu_\B$, \textit{i.e.}
  for any cylinder set $C\subset X$, we have 
  \begin{equation}
    \label{eq:eta_nus-generic}
    \dfrac{1}{N} \sum_{0\le n<N} \ind{C}(S^n\eta) \tend{N}{\infty} \nub(C).
  \end{equation}
\end{theo}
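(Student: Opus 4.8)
The plan is to reduce the genericity of $\eta$ to a counting problem about congruence conditions, and to solve that counting problem by comparing $\eta$ with suitable truncations. Recall from~\eqref{eq:output_eta} that $\eta_n=f(T^n\om)$, so that for a cylinder $C_{A,B}$, the indicator $\ind{C_{A,B}}(S^n\eta)$ equals $1$ precisely when $T^{n}\om$ falls into $\varphi^{-1}(C_{A,B})$, i.e.\ when $n+a$ is $\B$-free-compatible for all $a\in A$ and divisible by some $b_k$ for each $b\in B$. By Lemma~\ref{lemma:general} (applied to the events $E_n=\varphi^{-1}(C^0_{\{n\}})$), and by the analogous inclusion-exclusion identity at the level of the finite sums $\frac1N\sum_{n<N}\ind{C_{A,B}}(S^n\eta)$, it suffices to treat the case $B=\emptyset$, that is, to prove
\[
  \frac1N\sum_{0\le n<N}\ind{C_A^1}(S^n\eta)\tend{N}{\infty}\nub(C_A^1)=\prod_{k\ge1}\Bigl(1-\frac{t(A,b_k)}{b_k}\Bigr).
\]
Here $\ind{C_A^1}(S^n\eta)=1$ iff none of the integers $n+a$, $a\in A$, is divisible by any $b_k$.

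First I would introduce, for each $K\ge1$, the truncated family $\B_K\egdef\{b_1,\dots,b_K\}$ and the corresponding sequence $\eta^{(K)}$, i.e.\ $\eta^{(K)}_n=1$ iff no $b_k$ with $k\le K$ divides $n$. The point of the truncation is that $\eta^{(K)}$ is periodic, with period $b_1b_2\cdots b_K$ (using~\eqref{eq:relatively prime} and the Chinese Remainder Theorem), so by direct computation the frequency $\frac1N\sum_{n<N}\ind{C_A^1}(S^n\eta^{(K)})$ converges, as $N\to\infty$, to $\prod_{k=1}^K(1-t(A,b_k)/b_k)$, which is exactly the partial product. Then I would control the difference between $\eta$ and $\eta^{(K)}$: since $\eta_n\le\eta^{(K)}_n$ pointwise, the corresponding indicators satisfy $\ind{C_A^1}(S^n\eta)\le\ind{C_A^1}(S^n\eta^{(K)})$, and the discrepancy is bounded by the density of integers $m\le N+\max A$ that are divisible by some $b_k$ with $k>K$. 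By~\eqref{eq:finite_sum_of_inverse} this density is at most $\sum_{k>K}1/b_k$, which tends to $0$ as $K\to\infty$, uniformly in $N$. Combining the exact computation for $\eta^{(K)}$ with this uniform tail bound, and letting first $N\to\infty$ and then $K\to\infty$, yields the claimed limit for $C_A^1$; inclusion-exclusion then gives it for all $C_{A,B}$.

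The main obstacle is the interchange of the two limits $N\to\infty$ and $K\to\infty$, which is exactly the place where condition~\eqref{eq:finite_sum_of_inverse} is essential. Concretely, one has to show that
\[
  \limsup_{N\to\infty}\Bigl|\frac1N\sum_{0\le n<N}\bigl(\ind{C_A^1}(S^n\eta^{(K)})-\ind{C_A^1}(S^n\eta)\bigr)\Bigr|
  \le |A|\sum_{k>K}\frac{1}{b_k},
\]
which follows because a discrepancy at index $n$ forces some $n+a$ ($a\in A$) to be a multiple of some $b_k$ with $k>K$, and the number of multiples of $b_k$ in an interval of length $N$ is at most $N/b_k+1$. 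Once this estimate is in place, everything else is routine: the finitely many partial products $\prod_{k\le K}(1-t(A,b_k)/b_k)$ converge to the infinite product $\nub(C_A^1)$ as $K\to\infty$ (the infinite product converges by~\eqref{eq:finite_sum_of_inverse}), and a standard $\varepsilon/3$ argument closes the proof. An alternative route, which I would mention, is to invoke unique ergodicity: one could try to show directly that $\om$ is a generic point for $\PP$ in $(\Omega,T)$ and that $\varphi$ is continuous at $T^n\om$ for all $n$ — but $\varphi$ is not continuous, so this would still require the same tail estimate to handle the points of discontinuity, and the truncation argument is cleaner.
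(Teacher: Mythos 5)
Your strategy is essentially the paper's proof seen in a mirror: you reduce via Lemma~\ref{lemma:general} to ``pure'' cylinders (the all-ones cylinders $C_A^1$ rather than the paper's all-zeros cylinders $C_B^0$), you approximate by a truncation involving only $b_1,\dots,b_K$ (your periodic sequence $\eta^{(K)}$ plays exactly the role of the paper's finite-coordinate events $H_\ell$, and your Chinese Remainder computation of its frequency is the paper's observation that $T$ acts as a cyclic permutation of $\prod_{k\le K}\ZZ/b_k\ZZ$), and you control the tail using $\sum_{k>K}1/b_k\to 0$. One small slip in the reduction: to reduce to $C_A^1$ you must apply Lemma~\ref{lemma:general} with $E_n=\{\omega:\varphi(\omega)_n=1\}$, not with $E_n=\varphi^{-1}(C^0_{\{n\}})$ as written; the latter choice reduces to the all-zeros cylinders (which is what the paper does). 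Either reduction is fine, but pick one consistently.

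The one substantive issue is that your tail estimate, as written, does not close. You bound the discrepancy by $\frac1N\sum_{a\in A}\sum_{k>K}\#\{0\le n<N:\ b_k\mid n+a\}$ and invoke ``at most $N/b_k+1$ multiples of $b_k$ in an interval of length $N$''; but summing $N/b_k+1$ over the infinitely many $k>K$ diverges because of the $+1$ terms, so this does not directly give the claimed bound $|A|\sum_{k>K}1/b_k$, even in the $\limsup$. The repair is easy but must be said: for each $a\in A$ the integers $n+a$, $0\le n<N$, all lie in $[1,N+\max A]$, so the count of multiples of $b_k$ among them is at most $(N+\max A)/b_k$ (with no additive constant), whence the discrepancy is at most $|A|\,\frac{N+\max A}{N}\sum_{k>K}\frac{1}{b_k}\le 2|A|\sum_{k>K}\frac{1}{b_k}$ for $N\ge\max A$, uniformly in such $N$. (The paper's version of this step is inequality~\eqref{eq:2_sur_bk}: since the orbit starts at $\om=(0,0,\dots)$, the smallest $n$ with $b_k\mid n+m$ is $b_k-m>b_k/2$ once $b_k>2\sup B$, so each term contributes at most $2/b_k$ for every $N\ge1$.) With this correction the interchange of the limits in $N$ and $K$ is justified and your argument is complete.
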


\begin{proof}
 Recalling that $\eta=\varphi(\underline{\omega})$, we first observe that \eqref{eq:eta_nus-generic} is equivalent to
  \begin{equation}
    \label{eq:eta_nus-generic-bis}
    \dfrac{1}{N} \sum_{0\le n<N} \ind{\varphi^{-1}(C)}(T^n\underline{\omega}) \tend{N}{\infty} \PP\bigl(\varphi^{-1}(C)\bigr).
  \end{equation}
It is well known that, under the action of a minimal rotation on a compact group, each element of the group is generic for the normalized Haar measure on the group. So, if $\varphi$ was continuous, \eqref{eq:eta_nus-generic-bis} would be immediate. Unfortunately $\varphi$ is not continuous, and we have to work a little harder to prove the claim.

We note that it is enough to prove~\eqref{eq:eta_nus-generic-bis} in the case where the cylinder set $C$ is of the form $C_B^0$ for some finite subset $B\subset\NNS$. (Recall that $C_B^0$ is the set of sequences $(x_n)\in X$ satisfying $x_n=0$ for each $n\in B$.)
Indeed, if~\eqref{eq:eta_nus-generic-bis} is valid for each such $C_B^0$, then applying Lemma~\ref{lemma:general} with the family of events $E_n\egdef\{\omega:\varphi(\omega)_n=0\}$, both for $\PP$ and for the empirical measure $\frac{1}{N} \sum_{0\le n<N} \delta_{T^n\underline{\omega}}$, we can prove that \eqref{eq:eta_nus-generic-bis} holds for any cylinder set $C_{A,B}$.

Let $B$ be a finite subset of $\NNS$. We 
observe that \eqref{eq:eta_nus-generic-bis} for the cylinder set $C_B^0$ means
\begin{equation}
    \label{eq:eta_nus-generic-ter}
    \dfrac{1}{N} \sum_{0\le n<N} \ind{\varphi^{-1}(C_B^0)}(T^n\underline{\omega}) \tend{N}{\infty} \PP\bigl(\varphi^{-1}(C_B^0)\bigr).
  \end{equation}
We can write
$\varphi^{-1}(C_B^0)$ as the union of the increasing sequence of events $H_\ell$, where
\[
    H_\ell \egdef \bigl\{\omega\in\Omega: \forall m\in B,\ \exists 1\le k_m \le \ell,\  \omega_{k_m}+m=0\ [b_{k_m}]\,\bigr\}.
\]
We have $\PP(H_\ell)\tend{\ell}{\infty}\PP\bigl(\varphi^{-1}(C_B^0)\bigr)$.
Since $\ind{H_\ell}(\omega)$ only depends on the coordinates $\omega_k$, $1\le k\le \ell$,  and the action of $T$ on the finite group $\prod_{1\le k\le \ell}\ZZ/b_k\ZZ$ being, by ergodicity of $T$, a cyclic permutation of the elements of this group, we have 
\begin{equation}
\label{eq:ergodic_thm_in_finite_group}
  \dfrac{1}{N} \sum_{0\le n<N} \ind{H_\ell}(T^n\underline{\omega}) \tend{N}{\infty} \PP(H_\ell).
\end{equation}
To prove \eqref{eq:eta_nus-generic-ter}, it only remains to show that
\begin{equation}
    \label{eq:error_term}
    \sup_{N\ge 1} \dfrac{1}{N} \sum_{0\le n<N} \ind{\varphi^{-1}(C_B^0)\setminus H_\ell}(T^n\underline{\omega}) \tend{\ell}{\infty} 0.
\end{equation}
If $\omega\in \varphi^{-1}(C_B^0)\setminus H_\ell$, then there exist $m\in B$ and $k>\ell$ such that $\omega_k + m=0\ [b_k]$. Therefore we can write
\begin{equation}
\label{eq:majoration_error_term}
  \ind{\varphi^{-1}(C_B^0)\setminus H_\ell} \le \sum_{m\in B}\sum_{k>\ell} f_{m,k},
\end{equation}
where \[
        f_{m,k}(\omega)\egdef\begin{cases}
                               1 \text{ if }\omega_k+m=0\ [b_k],\\
                               0 \text{ otherwise.}
                             \end{cases}
      \]
For a fixed $m\in B$ and $k>\ell$, let us consider the sequence $\bigl(f_{m,k}(T^n\underline{\omega})\bigr)_{n\in\NNS}$. This is a sequence of 0-s and 1-s, in which two occurences of 1 are separated by a multiple of $b_k$. Moreover, as $\underline{\omega}_k=0$, 
the sequence shows  no 1 in the interval $1\le n< b_k-\sup B$. It follows that, provided $k$ is large enough so that $b_k>2\sup B$ (which is guaranted if  $\ell$ is large enough),  for any $N\ge 1$ we have the inequality
\begin{equation}
\label{eq:2_sur_bk}
  \dfrac{1}{N} \sum_{0\le n<N} f_{m,k}(T^n\underline{\omega}) \le \dfrac{2}{b_k}.
\end{equation}
We then have, for $\ell$ large enough,
\[
  \dfrac{1}{N} \sum_{0\le n<N} \ind{\varphi^{-1}(C_B^0)\setminus H_\ell}(T^n\underline{\omega})
  \le 2|B| \sum_{k>\ell}\dfrac{1}{b_k},
\]
which proves~\eqref{eq:error_term}.
\end{proof}

\begin{corollary}
\label{cor:subshift}
  The subshift $\overline{\{S^n\eta: n\in\NNS\}}$ generated by $\eta$ coincides with the subshift $X_\B$ of $\B$-admissible sequences.
\end{corollary}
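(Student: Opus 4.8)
Corollary~\ref{cor:subshift} — the orbit closure of $\eta$ under $S$ equals $X_\B$.

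The plan is to use the genericity of $\eta$ for $\nub$ (Theorem~\ref{thm:eta_generic}) together with the fact, established in the preceding proposition and its corollary, that the support of $\nub$ is exactly $X_\B$. First I would recall that $\eta$ is $\B$-admissible (Proposition~\ref{prop:admissible}) and that $X_\B$ is closed and shift-invariant; hence $\overline{\{S^n\eta:n\in\NNS\}}\subset X_\B$, which gives one inclusion for free. The work is in the reverse inclusion: every $\B$-admissible sequence must be approximable by shifts of $\eta$.

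For the reverse inclusion, I would argue as follows. Let $y\in X_\B$ and fix an arbitrary cylinder neighborhood of $y$; it suffices to realize it as $C_{A,B}$ where $A$ is the support of $y$ restricted to $\{1,\dots,N\}$ and $B$ its complement in $\{1,\dots,N\}$. Since $y$ is $\B$-admissible, $A$ is a $\B$-admissible finite set, so by the Proposition characterizing positivity of cylinder measures, $\nub(C_{A,B})>0$. By Theorem~\ref{thm:eta_generic}, $\eta$ is generic for $\nub$, so
\[
 \frac1N\sum_{0\le n<N}\ind{C_{A,B}}(S^n\eta)\tend{N}{\infty}\nub(C_{A,B})>0,
\]
which in particular forces $\ind{C_{A,B}}(S^n\eta)=1$ for infinitely many $n$; that is, some shift $S^n\eta$ lies in $C_{A,B}$, hence agrees with $y$ on the first $N$ coordinates. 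As $N$ and the neighborhood were arbitrary, $y$ is in the orbit closure of $\eta$.

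The only mild subtlety — and the thing I would be careful to state cleanly rather than the "main obstacle," since there is no real obstacle here — is that a basic open neighborhood of $y$ in $X=\{0,1\}^{\NNS}$ is determined by finitely many coordinates, and each such neighborhood is precisely a cylinder of the form $C_{A,B}$ with $A\sqcup B=\{1,\dots,N\}$; I would make this identification explicit so that "generic with positive frequency on $C_{A,B}$" translates directly into "$S^n\eta$ enters every neighborhood of $y$." Combining the two inclusions yields $\overline{\{S^n\eta:n\in\NNS\}}=X_\B$, completing the proof.
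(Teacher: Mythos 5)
Your proof is correct and is exactly the deduction the paper intends (the corollary is stated without proof, but the surrounding text makes clear it follows from Theorem~\ref{thm:eta_generic} together with the proposition that $\nub(C_{A,B})>0$ precisely when $A$ is $\B$-admissible, i.e.\ that the support of $\nub$ is $X_\B$). The only cosmetic point is that you use the same letter $N$ for the cylinder length and for the averaging parameter in the genericity limit; these should be distinct symbols.
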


Applying~\eqref{eq:eta_nus-generic} with the cylinder sets $C_A^1$ when $A=\{x\in X: x_1=x_3=1\}$ or $A=\{x\in X: x_1=x_2=1\}$ gives also the following interesting consequence of Theorem~\ref{thm:eta_generic}. 

\begin{corollary}[Twin $\B$-free integers]
 \label{cor:twin}
There always exist infinitely many integers $n$ such that $n$ and $n+2$ are simultaneously $\B$-free. 
If $2\notin\B$, there exist infinitely many integers $n$ such that $n$ and $n+1$ are simultaneously $\B$-free. 
\end{corollary}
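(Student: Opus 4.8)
The plan is to deduce Corollary~\ref{cor:twin} directly from Theorem~\ref{thm:eta_generic} together with the formula~\eqref{eq:nus_CA} for the measure of cylinders of the form $C_A^1$. Concretely, I would first treat the ``twin'' case $n,n+2$. Take $A=\{1,3\}\subset\NNS$ and consider the cylinder $C_A^1=\{x\in X: x_1=x_3=1\}$. Applying~\eqref{eq:eta_nus-generic} to this $C_A^1$, we get
\[
  \frac{1}{N}\sum_{0\le n<N}\ind{C_A^1}(S^n\eta)\tend{N}{\infty}\nub(C_A^1).
\]
The left-hand side counts the proportion of $n<N$ for which $\eta_{n+1}=\eta_{n+3}=1$, i.e. both $n+1$ and $n+3$ are $\B$-free; so it suffices to check $\nub(C_A^1)>0$, which by~\eqref{eq:nus_CA} means $\prod_{k\ge1}(1-t(A,b_k)/b_k)>0$. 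Now $t(A,b_k)=|\{1\bmod b_k,\ 3\bmod b_k\}|$, which is $\le 2$ always, and equals $b_k$ only if $b_k\in\{1,2\}$; since every $b_k\ge2$ and the $b_k$ are pairwise coprime, at most one of them equals $2$, and for that one $t(A,2)=|\{1\}|=1<2$. Hence $t(A,b_k)<b_k$ for every $k$, i.e. $A$ is $\B$-admissible. Combined with~\eqref{eq:finite_sum_of_inverse} (so that only finitely many factors can be appreciably less than $1$, and none is zero), the infinite product converges to a strictly positive value. Therefore the frequency above is a positive constant, which forces infinitely many such $n$.

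For the second assertion, assume $2\notin\B$ and take instead $A=\{1,2\}$, so $C_A^1=\{x: x_1=x_2=1\}$ and $\frac{1}{N}\sum_{0\le n<N}\ind{C_A^1}(S^n\eta)$ counts $n<N$ with $n+1$ and $n+2$ both $\B$-free. Again by~\eqref{eq:nus_CA} it is enough that $A$ is $\B$-admissible, i.e. $t(A,b_k)<b_k$ for all $k$. Here $t(A,b_k)=|\{1\bmod b_k,\ 2\bmod b_k\}|\le 2$, with equality to $b_k$ possible only when $b_k=2$; but $2\notin\B$ rules that out, so $t(A,b_k)\le 2<b_k$ for every $b_k\ge3$ in $\B$ (and $t(A,b_k)=2$ whenever $b_k\ge3$, which is still $<b_k$). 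As before, the product $\prod_k(1-t(A,b_k)/b_k)$ is a convergent product of strictly positive terms, hence positive, and Theorem~\ref{thm:eta_generic} yields a positive frequency of such $n$, so infinitely many of them.

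There is essentially no obstacle here: the whole content is packaged in Theorem~\ref{thm:eta_generic} and Proposition~\ref{prop:measure_of_cylinders}, and the only thing to verify by hand is the elementary combinatorial fact that $\{1,3\}$ is $\B$-admissible for any admissible $\B$, and $\{1,2\}$ is $\B$-admissible when $2\notin\B$ — both immediate from $t(A,b)\le|A|=2$ and the constraint $b_k\ge2$ with pairwise coprimality. If one wants to be slightly careful, the only point worth a sentence is that the infinite product is genuinely positive: since $\sum_k 1/b_k<\infty$ and $0\le t(A,b_k)/b_k\le 2/b_k$, the tail $\sum_k t(A,b_k)/b_k$ converges, so $\prod_k(1-t(A,b_k)/b_k)$ converges, and since no factor vanishes (admissibility), the limit is strictly positive. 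That is all that is needed.
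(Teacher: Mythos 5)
Your proof is correct and follows exactly the route the paper intends: apply Theorem~\ref{thm:eta_generic} to the cylinders $C_{\{1,3\}}^1$ and $C_{\{1,2\}}^1$, and use \eqref{eq:nus_CA} together with the $\B$-admissibility of $\{1,3\}$ (resp.\ of $\{1,2\}$ when $2\notin\B$) to see that the limiting frequency is strictly positive. The paper states this only as a one-line remark before the corollary, so your verification of admissibility and of the positivity of the infinite product is exactly the content being left to the reader.
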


This result was already known in the context of $r$-free integers by Mirsky's theorem~\cite{mirsky}, and the question of the speed of convergence of the density has been much studied using the circle method (see e.g.~\cite{heath-brown, dietman-marmon} and references therein). But to our knowledge its extension to the context of $\B$-free numbers is new. It would be natural to ask whether our method can be adapted to provide a speed of convergence.

\subsection{Averaging along arithmetic subsequences}
The method used to prove Theorem~\ref{thm:eta_generic} can also be useful if we are interested in the frequency of blocks in $\eta$ along arithmetic subsequences. Of course, all admissible blocks do not appear along any arithmetic subsequence since, for instance, $\eta_{nb_1}=0$ for each $n\ge0$. But, as an example of what we can say in this setting, we prove the following result.

\begin{theo}
\label{thm:arithmetic-subsequences}
 Let $p$ be a prime number. Then, there exists an integer $m\ge1$, depending only on $p$ and $\B$, such that for any integer $s\ge1$, we have for each cylinder set $C\subset X$
\begin{equation}
 \label{eq:arithmetic}
\dfrac{1}{p^m} \sum_{r=0}^{p^m-1}\ \dfrac{1}{N} \sum_{0\le n<N} \ind{C}(S^{np^s+r}\,\eta) \tend{N}{\infty} \nub(C).
\end{equation}
\end{theo}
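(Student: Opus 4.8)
The plan is to run the argument of Theorem~\ref{thm:eta_generic}, replacing the empirical measures $\frac1N\sum_{0\le n<N}\delta_{T^n\underline{\omega}}$ by the averaged empirical measures
\[
  \nu_N\egdef\frac1{p^m}\sum_{r=0}^{p^m-1}\ \frac1N\sum_{0\le n<N}\delta_{T^{np^s+r}\underline{\omega}},
\]
and to prove that $\nu_N\bigl(\varphi^{-1}(C)\bigr)\tend{N}{\infty}\PP\bigl(\varphi^{-1}(C)\bigr)=\nub(C)$ for every cylinder $C$; since $S^{np^s+r}\eta=\varphi(T^{np^s+r}\underline{\omega})$, this is exactly~\eqref{eq:arithmetic}. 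The crucial choice is that of $m$: as the $b_k$ are pairwise coprime, $p$ divides at most one of them, say $b_{k_0}$, and I would take $a\ge1$ to be the exponent of $p$ in $b_{k_0}$ (with $a\egdef0$ if no $b_k$ is divisible by $p$) and set $m\egdef\max(1,a)$, which depends only on $p$ and $\B$. Exactly as in Theorem~\ref{thm:eta_generic}, applying Lemma~\ref{lemma:general} to $\PP$ and to each probability measure $\nu_N$ reduces the claim to cylinders of the form $C_B^0$ with $B\subset\NNS$ finite; for such a $C$ I write $\varphi^{-1}(C_B^0)=\bigcup_\ell H_\ell$ as the increasing union of the finite-dimensional events $H_\ell$ of that proof, so that $\PP(H_\ell)\uparrow\PP\bigl(\varphi^{-1}(C_B^0)\bigr)$.

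For the \emph{main term} I would show $\nu_N(H_\ell)\tend{N}{\infty}\PP(H_\ell)$ for each $\ell$. Since $H_\ell$ depends only on $\omega_1,\dots,\omega_\ell$, this reduces, via the projection $\Omega\to\Omega_\ell\cong\ZZ/B_\ell\ZZ$ where $B_\ell\egdef\prod_{k\le\ell}b_k$, to showing that the empirical distribution of the multiset $\{\,np^s+r\bmod B_\ell:\ 0\le n<N,\ 0\le r<p^m\,\}$ converges to the uniform measure on $\ZZ/B_\ell\ZZ$. For a nontrivial character $\chi$ of $\ZZ/B_\ell\ZZ$, writing $\zeta\egdef\chi(1)$, the corresponding Fourier coefficient of $\nu_N$ equals $\frac1{p^mN}\bigl(\sum_{0\le n<N}\zeta^{p^sn}\bigr)\bigl(\sum_{0\le r<p^m}\zeta^{r}\bigr)$. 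If $\zeta^{p^s}\neq1$, the first factor is bounded independently of $N$, so the coefficient is $O(1/N)$. If $\zeta^{p^s}=1$, then the order of $\zeta$ is a power of $p$; as it also divides $B_\ell$, it is at most $p^a\le p^m$, hence $\zeta^{p^m}=1$ and $\sum_{0\le r<p^m}\zeta^{r}=0$, so the coefficient vanishes identically. Thus the image of $\nu_N$ on $\ZZ/B_\ell\ZZ$ converges to Haar measure, which gives $\nu_N(H_\ell)\to\PP(H_\ell)$.

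For the \emph{error term}, it remains to bound $\nu_N\bigl(\varphi^{-1}(C_B^0)\setminus H_\ell\bigr)$ uniformly in $N$ by a quantity tending to $0$ as $\ell\to\infty$. As in Theorem~\ref{thm:eta_generic}, on $\varphi^{-1}(C_B^0)\setminus H_\ell$ there are $j\in B$ and $k>\ell$ with $\omega_k+j=0\ [b_k]$, so it suffices to control, for each such $j$ and $k$, the quantity $\frac1{p^mN}\#\{(n,r):\ 0\le n<N,\ 0\le r<p^m,\ np^s+r+j=0\ [b_k]\}$. I would take $\ell$ large enough that $p\nmid b_k$ and $b_k>2(p^m+\sup B)$ for all $k>\ell$. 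If this set of pairs is nonempty, then $np^s+r\ge b_k-j$ for some admissible $(n,r)$, which forces $N>(b_k-j-p^m)/p^s>b_k/(2p^s)$; and since $p^s$ is invertible modulo $b_k$, for each fixed $r$ there are at most $N/b_k+1\le(1+2p^s)N/b_k$ admissible $n<N$. Summing over the $p^m$ values of $r$ bounds the quantity above by $(1+2p^s)/b_k$, so
\[
  \nu_N\bigl(\varphi^{-1}(C_B^0)\setminus H_\ell\bigr)\ \le\ (1+2p^s)\,|B|\sum_{k>\ell}\frac1{b_k}\ \tend{\ell}{\infty}\ 0,
\]
uniformly in $N\ge1$. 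Combining this with the convergence of the main term, exactly as in the proof of Theorem~\ref{thm:eta_generic}, yields~\eqref{eq:arithmetic}.

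I expect the only genuinely new difficulty, compared with Theorem~\ref{thm:eta_generic}, to be the coordinate $\omega_{k_0}$ with $p\mid b_{k_0}$ (when such a $k_0$ exists): along $\bigl(T^{np^s}\underline{\omega}\bigr)_n$ this coordinate is trapped in the proper subgroup $p^{\min(s,a)}\ZZ/b_{k_0}\ZZ$, so $\bigl(T^{np^s}\underline{\omega}\bigr)_n$ is not equidistributed in $\Omega$; it is precisely the averaging over the $p^m$ shifts $T^r$, with $m\ge a$, that restores equidistribution on each finite quotient $\Omega_\ell$, and verifying this (the character computation above) is the heart of the proof. Everything else is a routine adaptation of the proof of Theorem~\ref{thm:eta_generic}.
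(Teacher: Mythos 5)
Your proposal is correct and follows the same overall strategy as the paper's proof: reduce to cylinders of the form $C_B^0$ via Lemma~\ref{lemma:general} applied to both $\PP$ and the averaged empirical measures, approximate $\varphi^{-1}(C_B^0)$ by the finite-dimensional events $H_\ell$, and control the error term uniformly in $N$ by a quantity of order $\sum_{k>\ell}1/b_k$. The one step you handle by a genuinely different technique is the main term, i.e.\ the equidistribution of the averaged orbit on the finite quotient $\prod_{k\le\ell}\ZZ/b_k\ZZ$: you verify it by Weyl's criterion, factoring the character sum as $\frac{1}{p^mN}\bigl(\sum_n\zeta^{p^sn}\bigr)\bigl(\sum_r\zeta^r\bigr)$ and noting that whenever the first factor is not $O(1)$ the order of $\zeta$ is a power of $p$ dividing $p^m$, which kills the second factor; the paper instead uses the explicit structural isomorphism $\ZZ/b_{k_0}\ZZ\cong\ZZ/q\ZZ\times\ZZ/p^{m'}\ZZ$ with $m'=\min(m,s)$, on which addition of $p^s$ acts as $(1,0)$, so that $T^{p^s}$ cyclically permutes each coset of the index-$p^{m'}$ subgroup and averaging over $p^m$ consecutive shifts restores uniformity, allowing it to reuse~\eqref{eq:ergodic_thm_in_finite_group} directly. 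Both arguments identify the same obstruction (the coordinate $\omega_{k_0}$ trapped in a proper coset) and the same remedy; yours is a bit more self-contained and quantitative, the paper's keeps the dynamical picture more transparent. For the error term the paper merely asserts that the end of the proof of Theorem~\ref{thm:eta_generic} goes through; your explicit bound $(1+2p^s)|B|\sum_{k>\ell}1/b_k$, obtained by redoing the spacing argument of~\eqref{eq:2_sur_bk} with the offset $r$ and step $p^s$, confirms this and is correct since $s$ is fixed.
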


\begin{proof}
 We follow the same argument as in the proof of Theorem~\ref{thm:eta_generic}, with $T^{p^s}$ in place of $T$. For each $b_k$ which is not divisible by $p$, the addition of $p^s$ in $\ZZ/b_k\ZZ$ is isomorphic to the addition of 1. The only modification in the argument happens if there exists some $b_{k_0}$ which is divisible by $p$ (in this case, $b_{k_0}$ is unique by~\eqref{eq:relatively prime}). 
 Then, let $m$ be the highest power of $p$ dividing $b_{k_0}$, define $m'\egdef\min\{m,s\}$ and write $b_{k_0}=q p^{m'}$. The Euclidean division by $p^{m'}$ of an element $j\in\ZZ/b_k\ZZ$ (identified with $\{0,\ldots,b_k-1\}$) provides a natural isomorphism between $\ZZ/b_k\ZZ$ and the direct product $\ZZ/q\ZZ\times\ZZ/p^{m'}\ZZ$. Besides, the addition of $p^s$ in $\ZZ/b_{k_0}\ZZ$ is isomorphic to the addition of $(1,0)$ in $\ZZ/q\ZZ\times\ZZ/p^{m'}\ZZ$.
 Now everything works as in the preceding proof, except that we have added a supplementary cyclic group $\ZZ/p^{m'}\ZZ$ on which $T^{p^s}$ acts as the identity. Therefore, assuming $\ell\ge k_0$, \eqref{eq:ergodic_thm_in_finite_group} becomes
\begin{equation*}
\label{eq:ergodic_thm_in_finite_group_bis}
  \dfrac{1}{p^{m'}} \sum_{r=0}^{p^{m'}-1} \dfrac{1}{N} \sum_{0\le n<N} \ind{H_\ell}(T^{np^s+r}\underline{\omega}) \tend{N}{\infty} \PP(H_\ell).
\end{equation*}
Since $m'\le m$, $p^{m}$ is a multiple of $p^{m'}$ and we also have
\[
  \dfrac{1}{p^{m}} \sum_{r=0}^{p^{m}-1} \dfrac{1}{N} \sum_{0\le n<N} \ind{H_\ell}(T^{np^s+r}\underline{\omega}) \tend{N}{\infty} \PP(H_\ell).
\]
The end of the proof goes as in the proof of Theorem~\ref{thm:eta_generic}.
\end{proof}

\subsection{Distribution of $\B$-free numbers in short intervals}
We want to show here that the proof of Theorem~\ref{thm:eta_generic} can also be adapted to the case where averaging is done along intervals of the form $[N,N+\sqrt{N})$. 
The following theorem will show that the frequency of blocks in $\eta$ in these ``short intervals'' also conforms to the measure $\nub$,  provided we assume a further hypothesis on the set $\B$: We suppose here that
\begin{equation}
 \label{eq:new_hypothesis}
  \exists M,\ \forall x\text{ large enough, } \bigl| \B\cap [x,x+\sqrt{x}] \bigr| \le M.
\end{equation}

Note that the above assumption holds if, for example, there exists an integer $r\ge2$ such that $\B=\{p^r: p \text{ prime}\}$.  Indeed, the length of the interval $\left[x^{1/r},(x+\sqrt{x})^{1/r}\right]$ goes to 0 as $x\to\infty$, hence this interval contains at most one prime number for $x$ large.

\begin{theo}
 \label{thm:short_intervals}
 Assume that~\eqref{eq:new_hypothesis} holds. Then, for any cylinder set $C\subset X$, we have 
  \begin{equation}
    \label{eq:generic_on_short_intervals}
    \dfrac{1}{\sqrt{N}} \sum_{N\le n<N+\sqrt{N}} \ind{C}(S^n\eta) \tend{N}{\infty} \nub(C).
  \end{equation}
\end{theo}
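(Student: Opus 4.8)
## Proof proposal

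The plan is to follow the same scheme as in the proof of Theorem~\ref{thm:eta_generic}, replacing the averaging over $[0,N)$ by averaging over the short interval $[N,N+\sqrt N)$. As before, it suffices to treat cylinder sets of the form $C_B^0$ with $B\subset\NNS$ finite, since Lemma~\ref{lemma:general} lets us pass from the events $E_n=\{\omega:\varphi(\omega)_n=0\}$ to arbitrary cylinders $C_{A,B}$, applied simultaneously to $\PP$ and to the empirical measure $\frac{1}{\sqrt N}\sum_{N\le n<N+\sqrt N}\delta_{T^n\om}$ along the short intervals. So fix a finite $B\subset\NNS$ and, for $\ell\ge1$, recall
\[
  H_\ell = \bigl\{\omega\in\Omega:\ \forall m\in B,\ \exists\,1\le k_m\le\ell,\ \omega_{k_m}+m=0\ [b_{k_m}]\bigr\},
\]
an increasing sequence of events whose union is $\varphi^{-1}(C_B^0)$, with $\PP(H_\ell)\to\PP(\varphi^{-1}(C_B^0))$.

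The argument then splits into two parts. \textbf{Main term.} Since $\ind{H_\ell}$ depends only on $\omega_1,\dots,\omega_\ell$, the quantity $\frac{1}{\sqrt N}\sum_{N\le n<N+\sqrt N}\ind{H_\ell}(T^n\om)$ is an average of a fixed function over a block of $\sqrt N$ consecutive points of a cyclic rotation on the finite group $G_\ell=\prod_{k\le\ell}\ZZ/b_k\ZZ$. As $N\to\infty$ the length $\sqrt N$ of the block tends to infinity, so this average tends to $\PP(H_\ell)$: indeed, a block of $\sqrt N$ consecutive steps of a cyclic permutation of $|G_\ell|$ points visits each point either $\lfloor \sqrt N/|G_\ell|\rfloor$ or $\lceil\sqrt N/|G_\ell|\rceil$ times, so the discrepancy from $\PP(H_\ell)$ is $O(|G_\ell|/\sqrt N)\to0$ for each fixed $\ell$. \textbf{Error term.} It remains to show
\[
  \sup_{N\ge1}\ \frac{1}{\sqrt N}\sum_{N\le n<N+\sqrt N}\ind{\varphi^{-1}(C_B^0)\setminus H_\ell}(T^n\om)\ \tend{\ell}{\infty}\ 0.
\]
As in the earlier proof, on $\varphi^{-1}(C_B^0)\setminus H_\ell$ there are $m\in B$ and $k>\ell$ with $\omega_k+m=0\ [b_k]$, so $\ind{\varphi^{-1}(C_B^0)\setminus H_\ell}\le\sum_{m\in B}\sum_{k>\ell}f_{m,k}$ with $f_{m,k}(\omega)=1$ iff $\omega_k+m=0\ [b_k]$.

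Here is where hypothesis~\eqref{eq:new_hypothesis} enters, and this is the step I expect to be the main obstacle. For a single pair $(m,k)$, the set $\{n\ge1: f_{m,k}(T^n\om)=1\}$ is an arithmetic progression of common difference $b_k$, so a window of length $\sqrt N$ meets it in at most $\sqrt N/b_k+1$ points, giving a bound of order $1/b_k+1/\sqrt N$ per pair. Summing the ``$1/b_k$'' part over $k>\ell$ is harmless by~\eqref{eq:finite_sum_of_inverse}, but the ``$+1$'' (equivalently the $1/\sqrt N$ term) is summed over \emph{all} $k>\ell$ with $b_k\le$ (roughly) $N+\sqrt N$, and there can be far more than $\sqrt N$ such $k$, so a naive bound diverges. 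The resolution is to split the sum over $k$ at $b_k\approx\sqrt N$ (say at $b_k\le N^{1/2}$ versus $b_k>N^{1/2}$). For $k$ with $b_k$ small compared to $\sqrt N$, the progression is met $O(\sqrt N/b_k)$ times and we use $\sum_{k>\ell}1/b_k$; for $k$ with $b_k$ comparable to or larger than $\sqrt N$, a window of length $\sqrt N$ meets the progression in at most one point, so the total contribution of all such $k$ is bounded by the number of indices $k$ with, say, $\sqrt N/C\le b_k$ and $b_k$ contributing a point in $[N,N+\sqrt N)$ — but any such point must satisfy $b_k\mid n+m$ for some fixed $n$ in a window, i.e. $b_k\in[N-\sup B,\ N+\sqrt N]$, and by~\eqref{eq:new_hypothesis} the number of elements of $\B$ in an interval of length $O(\sqrt N)$ around $N$ is bounded by a constant $M$ (after covering $[N-\sup B,N+\sqrt N]$ by boundedly many intervals of the form $[x,x+\sqrt x]$). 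Hence the large-$b_k$ block contributes $O(|B|\cdot M/\sqrt N)\to0$, uniformly. Combining, for each fixed $\ell$ the $\sup_N$ is $\le 2|B|\sum_{k>\ell}1/b_k + o_N(1)$; letting first $N\to\infty$ — wait, we need uniformity in $N$ — more carefully, one shows $\sup_N(\cdots)\le 2|B|\sum_{k>\ell}1/b_k + C(B,M)\cdot\sup_N(1/\sqrt N)$, and the second term is a harmless constant only if handled as above: in fact the large-$b_k$ contribution is at most $|B|M/\sqrt N\le |B|M$ but goes to $0$ with $N$, so for the $\sup_N$ one instead argues that for $N$ below any threshold the inner average is trivially $\le$ a constant times $1/\sqrt N$-free bound, and the tail $\sum_{k>\ell}1/b_k$ controls everything once $\ell$ is large. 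Assembling the main term and error term estimates as in Theorem~\ref{thm:eta_generic} then yields~\eqref{eq:generic_on_short_intervals}, first for $C_B^0$ and then, via Lemma~\ref{lemma:general}, for every cylinder $C$.
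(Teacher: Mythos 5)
Your overall scheme — reduction to cylinders $C_B^0$, the truncation $H_\ell$, the main-term estimate via the cyclic rotation on the finite group, and the idea of splitting the indices $k$ according to whether $b_k$ is small or large compared to $\sqrt N$ — is exactly the paper's strategy, and the small-$b_k$ part and the main term are handled correctly. But there is a genuine gap in your treatment of the large $b_k$. You claim that if some $n\in[N,N+\sqrt N)$ satisfies $n+m=0\ [b_k]$ with $b_k\gtrsim\sqrt N$, then $b_k\in[N-\sup B,\,N+\sqrt N]$, so that \eqref{eq:new_hypothesis} bounds the number of such $k$ by a constant $M$ and the contribution is $O(M/\sqrt N)\to0$. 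This is only the case $j=1$ of the condition ``$n+m=jb_k$ for some integer $j\ge1$'': for $j\ge2$ one gets $b_k\approx(N+m)/j$, which can sit anywhere between (roughly) $L\sqrt N$ and $N$, and there can be on the order of $\sqrt N/L$ admissible values of $j$. The correct accounting is: for each $j$ with $1\le j<2\sqrt N/L$, the constraint forces $b_k\in\bigl[(N+m)/j,(N+m+\sqrt N)/j\bigr)$, an interval of length $\sqrt N/j\le\sqrt{(N+m)/j}$, to which \eqref{eq:new_hypothesis} applies and yields at most $M$ elements of $\B$; summing over $j$ gives at most $2M\sqrt N/L$ hits in the window, hence a normalized contribution of $2M/L$ per $m\in B$. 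This is \emph{not} $o(1)$ in $N$; it is small only because $L$ is chosen large, and this is precisely why the paper introduces the free parameter $L$ in the cutoff $b_k\le L\sqrt N$ and takes limits in the order: first $L\to\infty$ (to kill the large-$b_k$ term $2M|B|/L$), then $\ell\to\infty$ (to kill the small-$b_k$ term $L|B|\sum_{k>\ell}1/b_k$), working with $\limsup_{N\to\infty}$ rather than $\sup_N$ since the bounds require $N$ large. Your closing paragraph, where you notice something is off with uniformity in $N$ and backtrack, is symptomatic of this missing step; as written the error-term estimate does not close.
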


\begin{proof}
 We will follow the same lines as in the proof of Theorem~\ref{thm:eta_generic}. Again, it is enough to prove that, if $C$ is a cylinder set of the form $C_B^0$, then the following convergence holds:
  \begin{equation}
    \label{eq:generic_on_short_intervals_bis}
    \dfrac{1}{\sqrt{N}} \sum_{N\le n<N+\sqrt{N}}\ind{\varphi^{-1}(C_B^0)}(T^n\underline{\omega}) \tend{N}{\infty} \PP\bigl(\varphi^{-1}(C_B^0)\bigr).
  \end{equation}
For $\ell\ge1$, let $H_\ell$ be defined as in the proof of Theorem~\ref{thm:eta_generic}. For the same reason as in~\eqref{eq:ergodic_thm_in_finite_group}, we also have
\[
  \dfrac{1}{\sqrt{N}} \sum_{N\le n<N+\sqrt{N}} \ind{H_\ell}(T^n\underline{\omega}) \tend{N}{\infty} \PP(H_\ell),
\]
and it only remains to show that
\begin{equation}
    \label{eq:error_term_short_intervals}
    \limsup_{N\to\infty} \dfrac{1}{\sqrt{N}} \sum_{N\le n<N+\sqrt{N}} \ind{\varphi^{-1}(C_B^0)\setminus H_\ell}(T^n\underline{\omega}) \tend{\ell}{\infty} 0.
\end{equation}
We can use again~\eqref{eq:majoration_error_term}. Observing that 
\[  
f_{m,k}(T^n\underline{\omega})=\begin{cases}
                                 1\ \text{ if $b_k$ divides $n+m$,}\\
                                 0\ \text{ otherwise,}
                               \end{cases}
\]
we get that for any fixed $N\ge1$,
\begin{multline}
    \label{eq:error_term_short_intervals_bis}
  \dfrac{1}{\sqrt{N}} \sum_{N\le n<N+\sqrt{N}} \ind{\varphi^{-1}(C_B^0)\setminus H_\ell}(T^n\underline{\omega})  \\
  \le \sum_{m\in B}  \sum_{k>\ell}\dfrac{1}{\sqrt{N}}  \sum_{N\le n<N+\sqrt{N}} \ind{n+m=0\ [b_k]},
\end{multline}
where $\ind{n+m=0\ [b_k]}$ has to be seen as a function of $n$ which is either equal to $1$ if $n+m=0\ [b_k]$, or to 0 otherwise.

Let $L\ge2$ be a large enough number, to be precised later. For a fixed $N$, we distinguish two kinds of integers $b_k$:
\begin{itemize}
  \item We say that $b_k$ is \emph{small} if $b_k\le L\sqrt{N}$.
  \item We say that $b_k$ is \emph{large} if $b_k> L\sqrt{N}$.
\end{itemize}

\smallskip

Let us consider first the contribution of large $b_k$-s. For such a $b_k$, for any $m\in B$,  there exists at most one $n\in[N,N+\sqrt{N})$ such that $n+m=0\ [b_k]$. It follows that
\[
  \sum_{N\le n<N+\sqrt{N}} \ind{n+m=0\ [b_k]} \in\{0,1\}, 
\]
and the above expression takes value 1 if and only if there exists $N\le n<N+\sqrt{N}$ such that $n+m$ is a multiple of $b_k$. We rewrite the latter condition as follows: There exists an integer $j\ge1$ with $jb_k\in[N+m,N+m+\sqrt{N})$.
But, since $b_k$ is large, this implies 
\[
  L\sqrt{N} < b_k < \dfrac{N+m+\sqrt{N}}{j},
\]
hence, for $N$ large enough,
\[
  j < \dfrac{1}{L} \left(  \sqrt{N}+\dfrac{m}{\sqrt{N}}+1\right) < \dfrac{2\sqrt{N}}{L}.
\]
Since $\sqrt{N}/j\le \sqrt{(N+m)/j}$, it follows by assumption~\eqref{eq:new_hypothesis} that
\begin{multline}
  \sum_{\stack{k>\ell}{b_k\text{ large}}}\dfrac{1}{\sqrt{N}}  \sum_{N\le n<N+\sqrt{N}} \ind{n+m=0\ [b_k]} \\
   \le \dfrac{1}{\sqrt{N}}\sum_{1\le j < 2\sqrt{N}/L} \left| \B\cap \left[ (N+m)/j, (N+m+\sqrt{N})/j \right)\right| 
   \le \dfrac{2M}{L},
\end{multline}
hence we can assume that the contribution of large $b_k$-s in the right-hand side of \eqref{eq:error_term_short_intervals_bis} is arbitrarily small by choosing $L$ large enough.

\smallskip

Then consider the case of a small $b_k$. If $b_k>\sqrt{N}$, again for each $m\in B$ there exists at most one $n\in[N,N+\sqrt{N})$ such that $n+m=0\ [b_k]$, and we have 
\[
  \dfrac{1}{\sqrt{N}}  \sum_{N\le n<N+\sqrt{N}} \ind{n+m=0\ [b_k]} \le \dfrac{1}{\sqrt{N}} \le \dfrac{L}{b_k}.
\]
On the other hand, if $\sqrt{N}\ge b_k$,  since different integers $n$ satisfying $n+m=0\ [b_k]$ are separated by multiples of $b_k$, we have as in~\eqref{eq:2_sur_bk}
\[
  \dfrac{1}{\sqrt{N}}  \sum_{N\le n<N+\sqrt{N}} \ind{n+m=0\ [b_k]} \le \dfrac{2}{b_k} \le \dfrac{L}{b_k}.
\]
We finally get, for any $m\in B$,
\[
  \sum_{\stack{k>\ell}{b_k\text{ small}}}\dfrac{1}{\sqrt{N}}  \sum_{N\le n<N+\sqrt{N}} \ind{n+m=0\ [b_k]} \le L\sum_{k>\ell} \dfrac{1}{b_k},
\]
which can be made as small as desired by choosing $\ell$ large enough. This demonstrates the validity of \eqref{eq:error_term_short_intervals}, and concludes the proof of the theorem.
\end{proof}

One can observe that, as in Corollary~\ref{cor:twin}, the previous theorem proves the abundance of twin $\B$-free numbers in short intervals, provided that condition~\eqref{eq:new_hypothesis} is satisfied.

\section{Entropy of the subshift $X_\B$}

The purpose of this section is to compute the topological entropy of the subshift $X_\B$ generated by $\eta$, which measures the exponential growth of the number of subwords of length $n$ in $\eta$. We denote by $\gamma(n)$ this number:
\[
  \gamma(n) \egdef \bigl| \left\{  W\in\{0,1\}^n: W\text{ is $\B$-admissible}  \right\} \bigr|.
\]

For each $K\ge1$, let $\B_K\egdef\{b_1,\ldots,b_K\}\subset \B$. We define $\B_K$-admissibility in the same way as $\B$-admissibility, restricting condition~\eqref{eq:def-admissible} to $k\in\{1,\ldots,K\}$. We then set
\[
  \gamma_K(n) \egdef \bigl| \left\{  W\in\{0,1\}^n: W\text{ is $\B_K$-admissible}  \right\} \bigr|.
\]
Since $\B$-admissibility obviously implies $\B_K$-admissibility for all $K\ge1$, we have
\begin{equation}
  \label{eq:gammas} 
  \forall K\ge1,\ \gamma(n)\le \gamma_K(n).
\end{equation}
For a fixed $n$, the set of $\B_K$-admissible words of length $n$ decreases, as $K\to\infty$, to the set of $\B$-admissible words of length $n$, so that there exists $K(n)$ satisfying
\begin{equation}
  \label{eq:K(n)}
  \gamma(n)=\gamma_{K(n)}(n).
\end{equation}

We want now to estimate the quantity $\gamma_K(n)$ for $K\ge1$. For $z_k\in\ZZ/b_k\ZZ$, $1\le k\le K$, set
\[
  Z(z_1,\ldots,z_K) \egdef \{n\in\NNS: \exists 1\le k\le K,\ n=z_k\ [b_k]\}
\]
Notice that the set $\{1,...,n\}\setminus Z(z_1,...,z_K)$
is $\B_K$-admissible.

\begin{lemma}
  \label{lemma:chinese}
  If $n$ is a multiple of $b_1\cdots b_K$,
  \[
    \dfrac{1}{n} \bigl|  \{1,\ldots,n\}\setminus Z(z_1,\ldots,z_K) \bigr| = \prod_{k=1}^K \left( 1 - \dfrac{1}{b_k}\right).
  \]
\end{lemma}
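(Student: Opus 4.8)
The plan is to use the Chinese Remainder Theorem, which applies since $b_1,\ldots,b_K$ are pairwise relatively prime by~\eqref{eq:relatively prime}. First I would reduce to the case $n = b_1\cdots b_K$ itself: since $Z(z_1,\ldots,z_K)$ is a union of arithmetic progressions with periods $b_1,\ldots,b_K$, the indicator function $n\mapsto \ind{n\in Z(z_1,\ldots,z_K)}$ is periodic with period $b_1\cdots b_K$, so the density over any block of length a multiple of $b_1\cdots b_K$ equals the density over a single fundamental block $\{1,\ldots,b_1\cdots b_K\}$. Hence it suffices to count $|Z(z_1,\ldots,z_K)\cap\{1,\ldots,b_1\cdots b_K\}|$.

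Next I would count the complement directly. By the Chinese Remainder Theorem, the reduction map $\{1,\ldots,b_1\cdots b_K\}\to \prod_{k=1}^K \ZZ/b_k\ZZ$ sending $n$ to $(n \bmod b_1,\ldots,n\bmod b_K)$ is a bijection. Under this bijection, the set $\{1,\ldots,b_1\cdots b_K\}\setminus Z(z_1,\ldots,z_K)$, consisting of those $n$ with $n\neq z_k\ [b_k]$ for all $k$, corresponds exactly to the product set $\prod_{k=1}^K \bigl(\ZZ/b_k\ZZ\setminus\{z_k\}\bigr)$, which has cardinality $\prod_{k=1}^K (b_k-1)$. Dividing by $b_1\cdots b_K$ gives $\prod_{k=1}^K (1-1/b_k)$, as claimed.

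There is essentially no obstacle here: the statement is a clean consequence of CRT combined with the periodicity observation. The only point requiring minor care is phrasing the reduction from a general multiple of $b_1\cdots b_K$ to the fundamental domain, but this is routine since densities of periodic sets over full periods are exact. I would write it as follows.

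\begin{proof}
Since the integers $b_1,\ldots,b_K$ are pairwise relatively prime, the Chinese Remainder Theorem asserts that the map
\[
  n\longmapsto (n\bmod b_1,\ldots,n\bmod b_K)
\]
induces a bijection between $\{1,\ldots,b_1\cdots b_K\}$ and $\prod_{k=1}^K \ZZ/b_k\ZZ$. Under this bijection, an integer $n\in\{1,\ldots,b_1\cdots b_K\}$ lies in $\{1,\ldots,b_1\cdots b_K\}\setminus Z(z_1,\ldots,z_K)$ if and only if $n\neq z_k\ [b_k]$ for every $k\in\{1,\ldots,K\}$, that is, if and only if its image lies in $\prod_{k=1}^K\bigl(\ZZ/b_k\ZZ\setminus\{z_k\}\bigr)$. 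Therefore
\[
  \bigl| \{1,\ldots,b_1\cdots b_K\}\setminus Z(z_1,\ldots,z_K)\bigr| = \prod_{k=1}^K (b_k-1).
\]
Now, the set $Z(z_1,\ldots,z_K)$ is a finite union of arithmetic progressions of common differences $b_1,\ldots,b_K$, hence the indicator function $n\mapsto \ind{Z(z_1,\ldots,z_K)}(n)$ is periodic of period $b_1\cdots b_K$. Consequently, if $n$ is a multiple of $b_1\cdots b_K$, the interval $\{1,\ldots,n\}$ splits into $n/(b_1\cdots b_K)$ consecutive blocks of length $b_1\cdots b_K$, on each of which $Z(z_1,\ldots,z_K)$ has the same cardinality $\prod_{k=1}^K(b_k-1)$ as computed above for its complement, hence
\[
  \bigl| \{1,\ldots,n\}\setminus Z(z_1,\ldots,z_K)\bigr| = \dfrac{n}{b_1\cdots b_K}\prod_{k=1}^K(b_k-1).
\]
Dividing by $n$ yields
\[
  \dfrac{1}{n}\bigl| \{1,\ldots,n\}\setminus Z(z_1,\ldots,z_K)\bigr| = \dfrac{\prod_{k=1}^K(b_k-1)}{\prod_{k=1}^K b_k} = \prod_{k=1}^K\left(1-\dfrac{1}{b_k}\right).
\]
\end{proof}
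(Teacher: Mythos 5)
Your proof is correct and follows essentially the same route as the paper's: a Chinese Remainder Theorem bijection on a block of $b_1\cdots b_K$ consecutive integers to count the complement of $Z(z_1,\ldots,z_K)$ as $\prod_{k=1}^K(b_k-1)$, then extension to a general multiple $n$ by partitioning into such blocks (the paper phrases this with an arbitrary block $A$ of consecutive integers, you with periodicity of the indicator; these are the same observation).
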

\begin{proof}
If $A$ is a set of $b_1\cdots b_K$ consecutive integers, the Chinese Remainder Theorem tells us that the map $\theta:\ A\to\prod_{k=1}^K\ZZ/b_k\ZZ$ defined by
\[ \theta(j)\egdef \bigl( j \ [b_1],\ldots,j\, [b_K]\bigr)
\]
is bijective. But $\theta\bigl(A\setminus Z(z_1,\ldots,z_K)\bigr)$ is the set of $w=(w_1,\ldots,w_K)\in \prod_{k=1}^K\ZZ/b_k\ZZ$ satisfying
\[
 \forall 1\le k\le K,\ w_k\neq z_k.
\]
Since the cardinality of the latter set is $\prod_{k=1}^K(b_k-1)$, we get
\[
  \dfrac{|A\setminus Z(z_1,\ldots,z_K)|}{|A|} = \prod_{k=1}^K \left( 1 - \dfrac{1}{b_k}\right),
\]
and the lemma follows.
\end{proof}

\begin{prop}
  \label{prop:estimation gamma_K}
Assume that $n$ is a multiple of $b_1\cdots b_K$. Then
\begin{equation}
  \label{eq:estimation gamma_K}
  2^{n\prod_{k=1}^K \left( 1 - \frac{1}{b_k}\right)} 
	    \le \gamma_K(n)
		    \le  2^{n\prod_{k=1}^K \left( 1 - \frac{1}{b_k}\right)} \prod_{k=1}^K b_k.
\end{equation}
\end{prop}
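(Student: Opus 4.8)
The plan is to bound $\gamma_K(n)$ by relating it to the combinatorial structure revealed by the Chinese Remainder Theorem. Write $n = L \cdot b_1\cdots b_K$ and partition $\{1,\ldots,n\}$ into $L$ consecutive blocks of length $b_1\cdots b_K$. On each such block, Lemma~\ref{lemma:chinese} tells us that for any choice of $(z_1,\ldots,z_K)$, the complement $\{1,\ldots,n\}\setminus Z(z_1,\ldots,z_K)$ meets the block in exactly $\prod_{k=1}^K(b_k-1)$ positions; hence over the whole interval the free support has size exactly $n\prod_{k=1}^K(1-1/b_k)$, independent of the $z_k$'s.

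For the \emph{lower bound}, fix any single choice of $(z_1,\ldots,z_K)$, say $z_k=0$ for all $k$ (which corresponds to the support of $\eta$ itself restricted to $\B_K$). The $\B_K$-admissible words of length $n$ include every word supported inside $\{1,\ldots,n\}\setminus Z(0,\ldots,0)$: indeed, removing $1$'s from a $\B_K$-admissible word keeps it $\B_K$-admissible, and $\{1,\ldots,n\}\setminus Z(0,\ldots,0)$ is itself $\B_K$-admissible as noted before the lemma. There are $2^{|\{1,\ldots,n\}\setminus Z(0,\ldots,0)|} = 2^{n\prod_{k=1}^K(1-1/b_k)}$ such words, giving the left inequality of~\eqref{eq:estimation gamma_K}.

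For the \emph{upper bound}, I would argue that every $\B_K$-admissible word $W$ of length $n$ is supported inside the complement of \emph{some} set $Z(z_1,\ldots,z_K)$. Concretely: if $W$ is $\B_K$-admissible, then for each $k\le K$ there is at least one residue class $z_k$ modulo $b_k$ avoided by the support of $W$ (this is precisely $t(\mathrm{supp}(W),b_k)<b_k$); pick such a $z_k$ for each $k$. Then $\mathrm{supp}(W)\subset\{1,\ldots,n\}\setminus Z(z_1,\ldots,z_K)$. Hence
\[
  \gamma_K(n) \le \sum_{(z_1,\ldots,z_K)} 2^{|\{1,\ldots,n\}\setminus Z(z_1,\ldots,z_K)|}
  = \Bigl(\prod_{k=1}^K b_k\Bigr)\, 2^{n\prod_{k=1}^K(1-1/b_k)},
\]
where the number of tuples $(z_1,\ldots,z_K)$ is $\prod_{k=1}^K b_k$ and the exponent is constant by Lemma~\ref{lemma:chinese}. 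This is the right inequality.

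The only subtle point — and the one I would state carefully — is the claim used in the upper bound that a choice of avoided residue $z_k$ for each $k$ really forces $\mathrm{supp}(W)$ into a single $\{1,\ldots,n\}\setminus Z(z_1,\ldots,z_K)$; this is immediate from the definition of $Z$, since $n\in Z(z_1,\ldots,z_K)$ iff $n\equiv z_k\ [b_k]$ for some $k$, so avoiding every $z_k$ is exactly lying in the complement. There is a mild redundancy (several tuples may work for the same $W$), but since we only need an upper bound this overcounting is harmless. No obstacle here beyond bookkeeping; the content is entirely in Lemma~\ref{lemma:chinese}, already proved.
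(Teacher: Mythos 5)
Your proof is correct and is essentially the paper's own argument: the paper phrases it as a two-step procedure for generating admissible words (choose the tuple $(z_1,\ldots,z_K)$, then fill in the complement of $Z(z_1,\ldots,z_K)$ freely), but the counting — one fixed tuple for the lower bound, all $\prod_{k=1}^K b_k$ tuples for the upper bound, with Lemma~\ref{lemma:chinese} giving the constant exponent — is identical to yours. Your explicit justification that every word supported in the complement of $Z(z_1,\ldots,z_K)$ is indeed $\B_K$-admissible, and that every admissible word arises from some tuple of avoided residues, is exactly the content the paper leaves implicit.
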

\begin{proof}
Here is a way to produce a $\B_K$-admissible word $W=w_1\ldots w_n$ of length~$n$:
\begin{itemize}
  \item[Step 1.] Choose $(z_1,\ldots,z_K)\in \prod_{k=1}^K \ZZ/b_k\ZZ$, and set $w_j=0$ for $j\in Z(z_1,\ldots,z_K)$;
  \item[Step 2.] Complete the word by choosing arbitrarily $w_j\in\{0,1\}$ for each $j\in  \{1,\ldots,n\}\setminus Z(z_1,\ldots,z_K)$.
\end{itemize}
Observe that, if we have fixed $(z_1,\ldots,z_K)\in \prod_{k=1}^K \ZZ/b_k\ZZ$, 
then by Lemma~\ref{lemma:chinese}, the freedom given in Step~2 produces 
\[
  2^{n\prod_{k=1}^K \left( 1 - \frac{1}{b_k}\right)}
\]
different $\B_K$-admissible words. On the other hand, any $\B_K$-admissible word can be obtained in this way, and since there are $\prod_{k=1}^K b_k$ different possible choices in Step~1, we get the other inequality.
\end{proof}

\begin{theo}
  \label{thm:entropy}
  The topological entropy of the subshift $X_\B$ is given by
  \[
    h_{\text{top}} (X_\B) = \prod_{k=1}^\infty \left( 1 - \frac{1}{b_k}\right)
  \]
\end{theo}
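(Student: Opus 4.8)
The plan is to deduce the theorem from Proposition~\ref{prop:estimation gamma_K} by a careful passage to the limit, working around the fact that the two-sided bound in~\eqref{eq:estimation gamma_K} is only valid when $n$ is a multiple of $b_1\cdots b_K$. Recall that the topological entropy of the subshift $X_\B$ is $h_{\text{top}}(X_\B)=\lim_{n\to\infty}\frac1n\log_2\gamma(n)$, and that this limit exists (and equals the infimum) by subadditivity of $n\mapsto\log_2\gamma(n)$, since concatenating a $\B$-admissible word of length $n$ with one of length $n'$ need not be $\B$-admissible, but \emph{every} subword of a $\B$-admissible word is $\B$-admissible, so $\gamma$ is submultiplicative: $\gamma(n+n')\le\gamma(n)\gamma(n')$. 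Hence $h_{\text{top}}(X_\B)=\inf_n\frac1n\log_2\gamma(n)$.

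First I would establish the upper bound. Fix $K\ge1$ and let $P_K\egdef b_1\cdots b_K$. For $n$ a multiple of $P_K$, inequality~\eqref{eq:gammas} together with the right-hand side of~\eqref{eq:estimation gamma_K} gives $\frac1n\log_2\gamma(n)\le\prod_{k=1}^K(1-1/b_k)+\frac1n\log_2 P_K$. Letting $n\to\infty$ along multiples of $P_K$ (using that the full limit exists) yields $h_{\text{top}}(X_\B)\le\prod_{k=1}^K(1-1/b_k)$. Since this holds for every $K$, and $\prod_{k=1}^K(1-1/b_k)$ decreases to $\prod_{k=1}^\infty(1-1/b_k)$ (the infinite product converges to a positive limit precisely because $\sum 1/b_k<\infty$ by~\eqref{eq:finite_sum_of_inverse}), we get $h_{\text{top}}(X_\B)\le\prod_{k=1}^\infty(1-1/b_k)$.

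For the lower bound I would use the left-hand inequality of~\eqref{eq:estimation gamma_K} combined with~\eqref{eq:gammas} read in the other direction, or rather directly: the construction in the proof of Proposition~\ref{prop:estimation gamma_K} actually produces genuinely $\B_K$-admissible words, but to get $\B$-admissible words I need to be slightly more careful. The cleanest route is: fix $K$ and $n$ a multiple of $P_K$; choose $z_k\egdef 0$ for $1\le k\le K$, so that $\{1,\dots,n\}\setminus Z(0,\dots,0)$ has cardinality $n\prod_{k=1}^K(1-1/b_k)$ by Lemma~\ref{lemma:chinese}, and this set avoids every multiple of $b_k$ for $k\le K$. I claim that filling these positions arbitrarily with $0$s and $1$s and putting $0$ elsewhere gives a \emph{$\B$-admissible} word, not merely $\B_K$-admissible: indeed its support is contained in $\{1,\dots,n\}$ and contains no multiple of $b_k$ for $k\le K$, while for $k>K$ we have $b_k>n$ as soon as $K$ is large enough that $b_{K+1}>n$ — but that fails for fixed $K$ and large $n$. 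So instead I argue that the support, being contained in $\{j\le n: b_k\nmid j\ \forall k\le K\}$, has $t(\text{supp},b_k)\le n<b_k$ trivially for those $k>K$ with $b_k>n$; and for $k>K$ with $b_k\le n$ I must ensure the support misses a residue class mod $b_k$. This is the delicate point and the main obstacle: the naive construction controls only the classes $b_1,\dots,b_K$. One fix is to observe that one may additionally intersect with $\{j: j\not\equiv 0\ [b_k]\}$ for all $k\le K'$ where $K'$ is the largest index with $b_{K'}\le n$; then the count of free positions becomes $n\prod_{k=1}^{K'}(1-1/b_k)$, and since the construction still gives distinct $\B$-admissible words, $\gamma(n)\ge 2^{n\prod_{k=1}^{K'(n)}(1-1/b_k)}$ where $K'(n)\to\infty$. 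Hence $\frac1n\log_2\gamma(n)\ge\prod_{k=1}^{K'(n)}(1-1/b_k)\to\prod_{k=1}^\infty(1-1/b_k)$, giving $h_{\text{top}}(X_\B)\ge\prod_{k=1}^\infty(1-1/b_k)$. I expect the bookkeeping in this last argument — making sure the words counted are genuinely $\B$-admissible and genuinely distinct, and that the count is exactly a power of $2$ with the right exponent (which needs $n$ to be a multiple of $b_1\cdots b_{K'(n)}$, forcing one to choose $n$ along a suitable subsequence and invoke existence of the limit again) — to be the part requiring the most care, whereas the upper bound is essentially immediate from the proposition.
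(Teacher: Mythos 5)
Your upper bound is correct and is essentially the paper's: combine \eqref{eq:gammas} with the right-hand inequality of \eqref{eq:estimation gamma_K} along multiples of $b_1\cdots b_K$, then let $K\to\infty$. The genuine gap is in your lower bound. Your construction needs the positions $\{j\le n:\ b_k\nmid j\ \forall k\le K'\}$, with $K'=K'(n)$ the largest index such that $b_{K'}\le n$, to have cardinality (at least) $n\prod_{k=1}^{K'}(1-1/b_k)$, and you propose to guarantee this via Lemma~\ref{lemma:chinese} by restricting to $n$ divisible by $b_1\cdots b_{K'(n)}$. But such $n$ typically do not exist: divisibility forces $n\ge b_1\cdots b_{K'(n)}$, while the definition of $K'(n)$ forces $n<b_{K'(n)+1}$, so you would need $b_{K'+1}>b_1\cdots b_{K'}$, which fails already for $\B=\{p^2\}$ ($4\cdot 9=36>25$). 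Without exact divisibility the CRT count degrades by an error of order $b_1\cdots b_{K'}$, which dwarfs $n$ (for squares of primes, $b_1\cdots b_{K'(n)}\approx e^{2\sqrt n}$), so no elementary counting along these lines closes the argument. Your subsequence is empty, and the lower bound is not established.

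There are two ways to repair this. The paper's route avoids constructing $\B$-admissible words altogether: since for fixed $n$ the (finite, nested) sets of $\B_K$-admissible words of length $n$ stabilize, there is $K(n)$ with $\gamma(n)=\gamma_{K(n)}(n)$ (equation~\eqref{eq:K(n)}); subadditivity of $\log_2\gamma_{K(n)}$ then gives $\frac1n\log_2\gamma_{K(n)}(n)\ge\frac{1}{nb_1\cdots b_{K(n)}}\log_2\gamma_{K(n)}(nb_1\cdots b_{K(n)})$, and the left inequality of Proposition~\ref{prop:estimation gamma_K} applies to this longer, divisible length, yielding $\frac1n\log_2\gamma(n)\ge\prod_{k=1}^{K(n)}(1-1/b_k)\ge\prod_{k=1}^{\infty}(1-1/b_k)$ for \emph{every} $n$. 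Alternatively, your own set of free positions is exactly the set of $\B$-free integers in $\{1,\ldots,n\}$ (since $b_k>n$ imposes no constraint on $j\le n$); every subset of a $\B$-admissible set is $\B$-admissible, so $\gamma(n)\ge 2^{|\{j\le n:\ \eta_j=1\}|}$, and Theorem~\ref{thm:eta_generic} (applied to the cylinder $C^1_{\{1\}}$) gives $|\{j\le n:\eta_j=1\}|/n\to\prod_{k\ge1}(1-1/b_k)$, which finishes the lower bound without any divisibility condition on $n$. Either repair is short, but as written your argument does not go through.
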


\begin{proof}
  We have to prove that 
  \begin{equation}
    \label{eq:entropy}
    \lim_{n\to\infty} \dfrac{1}{n}\log_2\gamma(n) = \prod_{k=1}^\infty \left( 1 - \frac{1}{b_k}\right).
  \end{equation}
Let $K\ge 1$. For each $n$ which is a multiple of the product $b_1\cdots b_K$, we have
\begin{align*}
  \dfrac{1}{n}\log_2\gamma(n) & \le \dfrac{1}{n}\log_2\gamma_K(n) \quad\text{(by \ref{eq:gammas})}\\
  & \le \prod_{k=1}^K \left( 1 - \frac{1}{b_k}\right) + \dfrac{1}{n}\log_2\prod_{k=1}^K b_k\quad\text{(by Proposition \ref{prop:estimation gamma_K}).}\\
\end{align*}
By first choosing $K$ large enough, and then letting $n$ go to $\infty$, we get
\begin{equation}
  \label{eq:majoration_entropy}
  \limsup_{n\to\infty}\dfrac{1}{n}\log_2\gamma(n) \le \prod_{k=1}^\infty \left( 1 - \frac{1}{b_k}\right)
\end{equation}

On the other hand, using the subadditivity of $\log_2\gamma_K$:
\[
  \dfrac{1}{n}\log_2\gamma_K(n) \ge \dfrac{1}{nm}\log_2\gamma_K(nm),
\]
and recalling from~\eqref{eq:K(n)} that $\gamma(n)=\gamma_{K(n)}(n)$, we get, for all $n\ge 1$,
\begin{align*}
  \dfrac{1}{n}\log_2\gamma(n) & = \dfrac{1}{n}\log_2\gamma_{K(n)}(n) \\
  & \ge \dfrac{1}{nb_1\cdots b_{K(n)}} \log_2\gamma_{K(n)}(nb_1\cdots b_{K(n)}) \\
  & \ge \prod_{k=1}^{K(n)} \left( 1 - \frac{1}{b_k}\right) \quad\text{(by Proposition \ref{prop:estimation gamma_K})}\\
  & \ge \prod_{k=1}^{\infty} \left( 1 - \frac{1}{b_k}\right).
\end{align*}
Together with~\eqref{eq:majoration_entropy}, this proves~\eqref{eq:entropy}.
\end{proof}

The computation of the topological entropy of $X_\B$ has been done by Peckner in \cite{peckner} in the context of the square-free flow, that is when $\B$ is the set of squares of primes. In this paper, Peckner also proves that there exists a unique invariant probability measure on $X_\B$ with maximal entropy, and describes the structure of the measurable dynamical system defined by this measure. 
\begin{question}
  Can we generalize Peckner's results in the $\B$-free numbers setting?
\end{question}

\section{A generalized version of the Möbius function}

In the case where $\B$ is the set of squares of prime numbers, we have $\eta_n=\mob(n)^2$, where $\mob:\ n\mapsto \mob(n)$ is the classical Möbius function:
\[
  \mob(n) \egdef \begin{cases}
              0 \text{ if $n$ is divisible by a square,}\\
              (-1)^{d_n} \text{ otherwise, where $d_n$ is the number of prime factors of $n$.}
            \end{cases}
\]
We would like to interpret also this Möbius function (or its counterpart in a more general context) in a similar dynamical setting.

\smallskip

In this purpose, we assume now that for each $k\ge1$, $b_k=a_k^2$, where the integers $a_k$, $k\ge1$,  are pairwise relatively prime (one can have in mind that $(a_k)$ is an increasing sequence of prime numbers for example). 

We are interested in the sequence $\mu=(\mu_n)_{n\in\NNS}$ defined as follows: First, set, for each $n\in\NNS$,
\[ 
  \delta_n\egdef |\{k\ge1: a_k\text{ divides }n\}|.
\]
Next, define
\[
  \pi_n\egdef(-1)^{\delta_n}
\]
and finally
\[
  \mu_n\egdef \eta_n \pi_n.
\]
Of course,  if $(a_k)$ is the whole sequence of prime numbers, then $\mu$ coincides with the Möbius function. We shall refer to this situation as the \emph{Möbius case}. In this case, observe that $\pi$ is a multiplicative arithmetic function, which presents some analogy with the classical Liouville function (but the latter counts the number of prime divisors with multiplicity, which is not the case of our sequence $\pi$).

\smallskip

We now need to work in the spaces $Y\egdef\{-1,1\}^{\NNS}$, and $Z\egdef \{-1,0,1\}^{\NNS}$. If the context requires to clarify on which space, $X$, $Y$, or $Z$, acts the shift map, we shall use respectively the notations  $S_X$,  $S_Y$ and $S_Z$. 

\subsection{An interpretation of the Chowla conjecture}

We say that the sequence $(\mu_n)$ defined above satisfies the \emph{Chowla property} if, for any $r\ge1$, any  natural numbers  $0\leq s_1<\ldots<s_r$ and $i_1,\ldots,i_r\in\{1,2\}$ not all equal to~2, 
\begin{equation}
  \label{eq:Chowla property}
  \dfrac{1}{N} \sum_{1\le n\le N} \mu_{n+s_1}^{i_1}\cdots \mu_{n+s_r}^{i_r} \tend{N}{\infty} 0.
\end{equation}
The well-known Chowla conjecture asserts that, in the Möbius case, $(\mu_n)=(\mob(n))$ satisfies the above property.

\smallskip

Now we want to highlight the links between the Chowla property and the unpredictability of the sequence $\pi\egdef(\pi_n)_{n\in\NNS}$. Let us consider the most unpredictable shift-invariant probability measure on  $Y$, namely the Bernoulli measure denoted by $\beta$ under which the coordinates are independent and uniformly distributed on $\{-1,1\}$. Assume that $\pi$ is generic for $\beta$, \textit{i.e.} that for each cylinder set $C=\{y\in Y: y_1=\alpha_1,\ldots, y_m=\alpha_m\}$,
\begin{equation}
  \label{eq:pi is beta generic}
  \dfrac{1}{N} \sum_{0\le n<N} \ind{C}(S^n \pi) \tend{N}{\infty} \beta(C)=\dfrac{1}{2^{m}}.
\end{equation}

Recall from Theorem~\ref{thm:eta_generic} that the sequence $\eta=\mu^2$ is generic for the shift-invariant measure $\nu_\B$. Then it follows that the pair $(\eta,\pi)$ is generic for the product measure $\nu_\B\otimes\beta$ on $X\times Y$. Indeed, any weak limit $\rho$ of a subsequence
\[
  \left( \dfrac{1}{N_j}\sum_{0\le n<N_j} \delta_{(S_X^n \eta, S_Y^n \pi)} \right)_{j\ge 1}
\]
has to be a joining of the two dynamical systems $(X,\nu_\B,S_X)$ and $(Y,\beta,S_Y)$. But the former is a discrete-spectrum dynamical system, and the latter is a Bernoulli shift, hence these two systems are disjoint and $\rho$ can only be the product measure $\nu_\B\otimes\beta$ (see \cite{furstenberg}).
As a consequence, since $\mu_n=\eta_n\pi_n$,  we get that $\mu$ is generic for the probability measure $\nu_M$ defined on $Z$ as the pushforward measure of $\nu_\B\otimes\beta$ by the continuous map $\Theta: (x,y)\in X\times Y\mapsto x\cdot y\in Z$ where $(x\cdot y)_n\egdef x_n y_n$. We can easily compute the measure $\nu_M$ of each cylinder set $C=\{z\in Z: z_1=\alpha_1,\ldots,z_m=\alpha_m\}\subset Z$. First, let us denote by $C^2$ the associated cylinder set in $X$:
\[
  C^2\egdef\{x\in X: x_1=\alpha_1^2,\ldots,x_m=\alpha_m^2\},
\]
and set $\lambda(C)\egdef \sum_{1\le n\le m}\alpha_n^2$. Then we have
\[
  \nu_M(C) = \dfrac{1}{2^{\lambda(C)}}\, \nu_\B(C^2).
\]

\begin{theo}
  \label{thm:chowla and genericity}
  We have $(i)\Longrightarrow (ii) \Longleftrightarrow (iii)$, where \\
  $(i)$ The sequence $\pi$ is generic for $\beta$.\\
  $(ii)$ The sequence $\mu$ is generic for $\nu_M$.\\
  $(iii)$ The sequence $\mu$ satisfies the Chowla property.
\end{theo}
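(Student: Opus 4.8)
The plan is to establish the two implications $(i)\Rightarrow(ii)$ and $(ii)\Leftrightarrow(iii)$ separately, the first being essentially already done in the discussion preceding the theorem, and the second being a routine unpacking of what genericity for $\nu_M$ means in terms of empirical averages of products of coordinates.

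\textbf{$(i)\Rightarrow(ii)$.} This is exactly the argument sketched in the paragraphs before the theorem statement, so I would simply record it cleanly. Assuming $\pi$ is $\beta$-generic and recalling from Theorem~\ref{thm:eta_generic} that $\eta=\mu^2$ is $\nu_\B$-generic, I first argue that the pair $(\eta,\pi)$ is generic for $\nu_\B\otimes\beta$: any weak-$*$ limit of the empirical measures $\frac1{N_j}\sum_{0\le n<N_j}\delta_{(S_X^n\eta,S_Y^n\pi)}$ is a joining of $(X,\nu_\B,S_X)$ with $(Y,\beta,S_Y)$; since the former has discrete spectrum and the latter is Bernoulli, these systems are disjoint in the sense of Furstenberg, so the only joining is the product, and hence the whole sequence of empirical measures converges to $\nu_\B\otimes\beta$. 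Applying the continuous factor map $\Theta(x,y)=x\cdot y$ and using $\mu_n=\eta_n\pi_n$ gives that $\mu$ is generic for $\Theta_*(\nu_\B\otimes\beta)=\nu_M$.

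\textbf{$(ii)\Leftrightarrow(iii)$.} Here the key observation is that the Chowla property \eqref{eq:Chowla property} is a statement about the convergence of empirical averages of the functions $z\mapsto z_{s_1+1}^{i_1}\cdots z_{s_r+1}^{i_r}$ along the orbit of $\mu$, and genericity of $\mu$ for $\nu_M$ controls exactly such averages. Concretely, for any finite tuple $0\le s_1<\cdots<s_r$ and exponents $i_1,\dots,i_r\in\{1,2\}$, the function $g(z)\egdef\prod_{j=1}^r z_{s_j+1}^{i_j}$ is a finite linear combination of indicator functions of cylinder sets of $Z$ (it takes only finitely many values, each on a cylinder), so $\mu$ being $\nu_M$-generic is equivalent to
\[
\frac1N\sum_{1\le n\le N} g(S_Z^n\mu)=\frac1N\sum_{1\le n\le N}\mu_{n+s_1}^{i_1}\cdots\mu_{n+s_r}^{i_r}\tend{N}{\infty}\int_Z g\,d\nu_M
\]
for every such $g$. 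It then remains to compute $\int_Z g\,d\nu_M$ and check it vanishes precisely when not all $i_j$ equal $2$. Since $\nu_M=\Theta_*(\nu_\B\otimes\beta)$, writing $g(\Theta(x,y))=\prod_{j:\,i_j=1}x_{s_j+1}y_{s_j+1}\cdot\prod_{j:\,i_j=2}x_{s_j+1}$ and integrating, the $y$-integral factors out as $\prod_{j:\,i_j=1}\int y_{s_j+1}\,d\beta$, which is $0$ as soon as at least one $i_j=1$, and is $1$ (with the integral reducing to $\nu_\B$ of a cylinder in $X$) when all $i_j=2$. Thus $\int_Z g\,d\nu_M=0$ for all the tuples appearing in \eqref{eq:Chowla property}, which gives $(ii)\Rightarrow(iii)$; conversely, since cylinder indicators in $Z$ are spanned by such products $g$ (via $\ind{\{z_j=\alpha_j\}}=\frac12(\alpha_j z_j+\alpha_j^2 z_j^2)$, expanded out, or directly by the $\lambda(C)$ bookkeeping already introduced before the theorem), the Chowla property forces convergence of all cylinder averages to the corresponding $\nu_M$-values, giving $(iii)\Rightarrow(ii)$.

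\textbf{Main obstacle.} The genuinely substantive input is the disjointness argument in $(i)\Rightarrow(ii)$, but that is quoted from \cite{furstenberg} together with the discrete-spectrum property from Theorem~\ref{thm:isomorphism}, so no new work is needed there. The only place demanding care is the $(iii)\Rightarrow(ii)$ direction: one must verify that the linear span of the product functions $z\mapsto\prod_j z_{s_j+1}^{i_j}$ with exponents in $\{1,2\}$, together with the constants, is dense enough — in fact equals the space of functions depending on finitely many coordinates — so that controlling these particular averages controls all cylinder averages. This is elementary (each coordinate $z_j\in\{-1,0,1\}$ is determined by the pair $(z_j,z_j^2)$, so products of such pairs span all functions of finitely many coordinates), but it is the one point where the argument must be spelled out rather than merely cited.
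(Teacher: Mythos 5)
Your proposal is correct and follows essentially the same route as the paper: $(i)\Rightarrow(ii)$ by the disjointness of the discrete-spectrum system $(X,\nu_\B,S_X)$ from the Bernoulli system $(Y,\beta,S_Y)$, $(ii)\Rightarrow(iii)$ by computing $\int_Z g\,d\nu_M$ via the product structure $\nu_M=\Theta_*(\nu_\B\otimes\beta)$, and $(iii)\Rightarrow(ii)$ by showing the monomials $\prod_j z_{s_j}^{i_j}$ control all cylinder averages (the paper invokes Stone--Weierstrass here, while you span the finitely-supported functions explicitly -- an equivalent, if anything more elementary, formulation). One small slip: the identity $\ind{\{z_j=\alpha_j\}}=\frac12\bigl(\alpha_j z_j+\alpha_j^2 z_j^2\bigr)$ fails for $\alpha_j=0$ (one needs $1-z_j^2$ there), but your general claim that $1$, $z_j$, $z_j^2$ span all functions of a coordinate in $\{-1,0,1\}$ is what actually matters and is correct.
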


(Note that the equivalence between $(ii)$ and $(iii)$ has been stated in \cite{sarnak} in the context of the classical Möbius function.)
\begin{proof}
  We have already seen just before the theorem that $(i)\Longrightarrow (ii)$, and it only remains to prove the equivalence of $(ii)$ and $(iii)$. 
  
Consider $f: z\in Z\mapsto f(z)\egdef z_1$,  and notice that
\[
  \dfrac{1}{N}\sum_{1\le n\le N} \mu_{n+s_1}^{i_1}
\cdots \mu_{n+s_r}^{i_r} 
= \dfrac{1}{N}\sum_{0\le n<N}\!\big( f^{i_1}\!\circ\! S^{s_1}\cdots  f^{i_r}\circ\! S^{s_r}\!\big)(\!S^n\mu\!).
\]
If we assume that $(ii)$ holds, then the above expression converges to the integral 
\begin{align*}
 &\int_{Z} f^{i_1}\!\circ\! S^{s_1}\cdots  f^{i_r}\circ\! S^{s_r}\,d\nu_M\\
 &=\sum_{\alpha_1,\ldots,\alpha_r=\pm1} \alpha_1^{i_1}\cdots \alpha_r^{i_r} \ \nu_M\Bigl(\bigl\{z\in Z:(z_{s_1},\ldots,z_{s_r})=(\alpha_1,\ldots,\alpha_r)\bigr\}\Bigr)\\
 &= \left(\sum_{\alpha_1,\ldots,\alpha_r=\pm1} \alpha_1^{i_1}\cdots \alpha_r^{i_r} \right)\frac{1}{2^{r}}\ 
\nu_\B\Bigl(\bigl\{x\in X: x_{s_1}=\ldots=x_{s_r}=1\bigr\}\Bigr).
\end{align*}
But the first factor in the right-hand side is equal to 0 as soon as at least one of the exponents $i_j$ is equal to~1, hence we get $(iii)$.

To obtain the other direction, note that $(iii)$ tells us that we have a correct limit for the products $f^{i_1}\circ S^{s_1}\cdot\ldots\cdot f^{i_r}\circ S^{s_r}$ when the exponents are not all even. But in the case of even exponents, the limit is always correct by Theorem~\ref{thm:eta_generic}. Since the family of functions $f^{i_1}\circ S^{s_1}\cdot\ldots\cdot f^{i_r}\circ S^{s_r}$ (when $r$, the $s_j$ and the exponents $i_j$ vary) separates points and is closed under taking products, (ii) follows by using the Stone-Weierstrass theorem.
\end{proof}

\subsection{Genericity of the sequence $\pi$}

Now, we consider the compact abelian group $\Omega'\egdef \prod_{k\ge 1}\ZZ/a_k\ZZ$, we denote by $\PP'$ the Haar measure on $\Omega'$, and by $T'$ the addition of $(1,1,\ldots)$ in $\Omega'$. Observe that the sequence $\pi=(\pi_n)_{n\in\NNS}$ can also be observed in the dynamical system $(\Omega',T',\PP')$: Indeed, consider the map $\Delta:\ \Omega'\to\NNS\cup\{\infty\}$ defined by 
\[
  \Delta(\omega') \egdef \bigl|\{k\ge1: \omega'_k = 0 \}\bigr|.
\]
Then, denoting by $\underline{\omega'}$ the point $(0,0,\ldots)\in\Omega'$, we have for $n\in\NNS$
\[
  \delta_n = \Delta(T'^n \underline{\omega'}),\text{ and}\quad \pi_n = (-1)^{\Delta(T'^n \underline{\omega'})}.
\]

The main difficulty lies in the fact that the quantity $\Delta(\omega')$ can take infinite values: For example, $\Delta(\underline{\omega'})=\infty$ (while $\Delta(T'^n \underline{\omega'})$ is finite for $n\ge1$). In particular, if $\sum_{k\ge1} 1/a_k=\infty$ (which happens in the Möbius case), by independence of the events $(\omega'_k=0)$, $k\ge 1$, the second Borel-Cantelli lemma gives that $\Delta(\omega')=\infty$ for $\PP'$-almost all $\omega'$.

\smallskip

However, we can place ourselves in the case where 
\begin{equation}
  \label{eq:Sigma_is_finite}
  \Sigma\egdef\sum_{k\ge1}\dfrac{1}{a_k} < \infty.
\end{equation}
Note that, in this situation, what we proved in the context of $\Omega$ is also valid in the context of $\Omega'$.

Under the hypothesis~\eqref{eq:Sigma_is_finite}, the Borel-Cantelli lemma ensures that $\Delta(\omega')<\infty$ $\PP'$-a.s., and we can consider the map $\psi:\ \Omega'\to Y$ defined $\PP'$-almost surely by
\[
  \psi(\omega') \egdef \Bigl( (-1)^{\Delta(T'\omega')}, (-1)^{\Delta(T'^2\omega')},\ldots, (-1)^{\Delta(T'^n\omega')},\ldots\Bigr).
\]

Let us denote by $\nup$ the pushforward measure of $\PP'$ by $\psi$. Then $\nup$ is a shift-invariant probability measure on $Y$.

\begin{theo}
\label{thm:pi_generic}
  Assume that \eqref{eq:Sigma_is_finite} holds. Then the sequence $\pi$ is generic for $\nup$, \textit{i.e.} for any cylinder set $C$, we have
  \begin{equation}
    \label{eq:pi_num-generic}
    \dfrac{1}{N} \sum_{0\le n<N} \ind{C}(S^n\pi) \tend{N}{\infty} \nup(C).
  \end{equation}
\end{theo}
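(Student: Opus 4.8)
The plan is to mimic the structure of the proof of Theorem~\ref{thm:eta_generic}, replacing the role of $\varphi$ by the almost-everywhere-defined map $\psi$ and handling the extra complication that $\Delta$ is an infinite sum rather than a finite "OR" over coordinates. Since $\eta=\varphi(\underline\omega)$ and here $\pi=\psi(\underline{\omega'})$, the convergence \eqref{eq:pi_num-generic} is equivalent to saying that $\underline{\omega'}$ is generic along the orbit under $T'$ for the Haar measure $\PP'$ tested against functions of the form $\ind{\psi^{-1}(C)}$. As in the $\eta$ case, the minimal rotation $(\Omega',T',\PP')$ is uniquely ergodic, so if $\ind{\psi^{-1}(C)}$ were (Riemann-integrable, i.e.) continuous $\PP'$-a.e. we would be done immediately; the issue is that $\psi$ itself is only defined $\PP'$-a.e. (where $\Delta<\infty$) and is discontinuous, so we must produce a controlled approximation.

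First I would reduce, exactly as in the proof of Theorem~\ref{thm:eta_generic}, to cylinder sets of a special form. Here the natural building blocks are the cylinders fixing finitely many coordinates of $\pi$ to be $-1$ — equivalently, by the identity $\pi_n=(-1)^{\delta_n}$, cylinders specifying the parity of $\delta_n$ on a finite set; by inclusion–exclusion (Lemma~\ref{lemma:general} applied both to $\PP'$ and to the empirical measures) it suffices to treat, for a finite set $B\subset\NNS$, the event $\{\omega'\colon \Delta(T'^n\omega')\text{ is odd for all }n\in B\}$, or more conveniently still the "product" quantities $\prod_{n\in B}\pi_n$ that appear when testing against characters of $Y$; any of these reductions will do. Second, I would introduce the finite-coordinate truncation $\Delta_\ell(\omega')\egdef|\{1\le k\le \ell\colon \omega'_k=0\}|$ and the corresponding truncated map $\psi_\ell$, which is genuinely continuous and depends only on $\omega'_1,\ldots,\omega'_\ell$. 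As in \eqref{eq:ergodic_thm_in_finite_group}, unique ergodicity of the action of $T'$ on the finite group $\prod_{1\le k\le\ell}\ZZ/a_k\ZZ$ gives that $\underline{\omega'}$ is generic for the corresponding finite Haar measure tested against any function of $\psi_\ell$; so the empirical averages of $\ind{C}\circ\psi_\ell$ converge to $\nup_\ell(C)$, where $\nup_\ell$ is the pushforward of $\PP'$ by $\psi_\ell$, and $\nup_\ell(C)\to\nup(C)$ as $\ell\to\infty$ by dominated convergence since $\Delta_\ell\uparrow\Delta<\infty$ a.s.

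The remaining and crucial point is the uniform control of the error term: I need
\[
  \sup_{N\ge1}\ \frac{1}{N}\sum_{0\le n<N}\bigl|\ind{C}(\psi(T'^n\underline{\omega'}))-\ind{C}(\psi_\ell(T'^n\underline{\omega'}))\bigr|\ \tend{\ell}{\infty}\ 0,
\]
and the point is that the left integrand is nonzero only when $\psi$ and $\psi_\ell$ disagree on one of the finitely many coordinates in $B$, which forces the existence of some $k>\ell$ with $a_k\mid (n+m)$ for some $m\in B$ — exactly the situation controlled in the proof of Theorem~\ref{thm:eta_generic} by \eqref{eq:majoration_error_term}–\eqref{eq:2_sur_bk}. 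Indeed, bounding $\ind{\cdot}\le\sum_{m\in B}\sum_{k>\ell}f_{m,k}$ with $f_{m,k}(\omega')=1$ iff $\omega'_k+m=0\ [a_k]$, and using that along the $T'$-orbit of $\underline{\omega'}$ the ones of $n\mapsto f_{m,k}(T'^n\underline{\omega'})$ are spaced by multiples of $a_k$ with a gap of length $\ge a_k-\sup B$ at the start, gives the uniform bound $\tfrac1N\sum_{0\le n<N}f_{m,k}(T'^n\underline{\omega'})\le 2/a_k$ once $a_k>2\sup B$, hence an overall bound $2|B|\sum_{k>\ell}1/a_k$, which tends to $0$ by \eqref{eq:Sigma_is_finite}. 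Combining the three ingredients — reduction to good cylinders, convergence of the truncated averages, and the uniform error estimate — yields \eqref{eq:pi_num-generic}. The main obstacle, and the only place where hypothesis \eqref{eq:Sigma_is_finite} is genuinely used, is precisely this last uniform estimate: without summability of $1/a_k$ the tail of $\Delta$ is uncontrollable (the Borel–Cantelli phenomenon noted before the theorem), and the approximation scheme breaks down, which is why the Möbius case is out of reach by this method.
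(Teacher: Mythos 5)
Your proposal is correct and follows essentially the same route as the paper: truncate $\Delta$ to $\Delta_\ell$ (the paper's event $G_\ell$ is exactly your $\psi_\ell^{-1}(C)$), use unique ergodicity of $T'$ on the finite quotient group for the truncated averages together with dominated convergence for $\PP'(G_\ell)\to\PP'\bigl(\psi^{-1}(C)\bigr)$, and control the symmetric difference uniformly in $N$ via $\sum_{m\in B}\sum_{k>\ell}f'_{m,k}$ and the $2/a_k$ spacing bound. The only cosmetic difference is your preliminary reduction to special cylinders, which the paper skips because the truncation argument applies directly to an arbitrary cylinder $\{y:\ \forall m\in B,\ y_m=\alpha_m\}$.
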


\begin{proof}
  The arguments are similar to those in the proof of Theorem~\ref{thm:eta_generic}. Let $B$ be a finite subset of $\NNS$, let $(\alpha_m)_{m\in B}\in\{-1,1\}^B$, and let $C\subset \{-1,1\}^\NNS$ be the cylinder set 
  \[
    C\egdef\left\{y=(y_n)_{n\in\NNS}\in\{-1,1\}^\NNS: \forall m\in B,\ y_m=\alpha_m\right\}.
  \]
We have to prove the convergence
\[
  \dfrac{1}{N} \sum_{0\le n<N} \ind{\psi^{-1}(C)}(T'^n\underline{\omega'}) \tend{N}{\infty} \PP'\bigl(\psi^{-1}(C)\bigr).
\]
Set, for $\omega'\in\Omega'$ and $\ell\ge 1$
\[
  \Delta_\ell(\omega') \egdef  \big|\{k: 1\le k\le \ell,\ \omega'_k = 0\}\big|.
\]
Under the assumption~\eqref{eq:Sigma_is_finite}, $\Delta_\ell(\omega')\tend{\ell}{\infty}\Delta(\omega)$ $\PP'$-a.s.
Hence, considering the event
\[
  G_\ell \egdef \left\{\omega'\in\Omega': \forall m\in B,\ (-1)^{\Delta_\ell(T'^m\omega')}=\alpha_m\right\},
\]
we have 
\[
  \ind{G_\ell}\tend[\PP'\text{-a.s.}]{\ell}{\infty} \ind{\psi^{-1}(C)},
\]
and by the dominated convergence theorem
\[
  \PP'(G_\ell)\tend{\ell}{\infty}\PP'\bigl(\psi^{-1}(C)\bigr).
\]
For the same reason as in~\eqref{eq:ergodic_thm_in_finite_group}, we have
\[
  \dfrac{1}{N} \sum_{0\le n<N} \ind{G_\ell}(T'^n\underline{\omega'}) \tend{N}{\infty} \PP'(G_\ell),
\]
and it only remains to show that
\begin{equation}
    \label{eq:error_term_lambda}
    \sup_{N\ge 1} \dfrac{1}{N} \sum_{0\le n<N} \ind{\psi^{-1}(C)\vartriangle G_\ell}(T'^n\underline{\omega'}) \tend{\ell}{\infty} 0.
\end{equation}
But, as in the proof of Theorem~\ref{thm:eta_generic}, if $\omega'\in \psi^{-1}(C)\vartriangle G_\ell$, then there exist $m\in B$ and $k>\ell$ such that $\omega'_k + m=0\ [a_k]$. We get, as in~\eqref{eq:majoration_error_term},
\begin{equation*}
  \ind{\psi^{-1}(C)\vartriangle G_\ell} \le \sum_{m\in B}\sum_{k>\ell} f'_{m,k},
\end{equation*}
where \[
        f'_{m,k}(\omega')\egdef\begin{cases}
                               1 \text{ if }\omega'_k+m=0\ [a_k],\\
                               0 \text{ otherwise,}
                             \end{cases}
      \]
and we can repeat word for word the end of the proof of Theorem~\ref{thm:eta_generic}.
\end{proof}

Now, what can we say about the measure $\nup$? Since the measurable dynamical system $(Y, \nup, S)$ is a factor of $(\Omega', \PP', T')$, it has zero entropy, and we cannot hope that this measure $\nup$ be the Bernoulli measure $\beta$. Moreover, observe that 
\[
 \PP'(\exists k: \omega'_k=0) \le \sum_{k\ge 1} \PP'(\omega'_k=0) = \Sigma.
\]
Therefore, with probability at least $1-\Sigma$, there is no $k$ such that $\omega'_k=0$, hence $(-1)^{\Delta(\omega')}=1$ with probability at least  $1-\Sigma$, which can be close to 1 if $\Sigma$ is small. However, the following results show that, as we approach the Möbius case, this measure $\nup$ for which $\pi$ is generic looks more and more like the Bernoulli measure~$\beta$. 

\begin{prop}
\label{prop:to1/2}
  As $\Sigma\to\infty$, we have the convergence 
  \[
    \PP'\left( (-1)^{\Delta}=1 \right) \longrightarrow \dfrac{1}{2}.
  \]
\end{prop}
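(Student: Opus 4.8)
The plan is to observe that $\Delta$ is a sum of independent Bernoulli random variables and to apply the classical formula for the probability that such a sum is even. Concretely, under $\PP'$ the events $E_k\egdef\{\omega'_k=0\}$ are independent with $\PP'(E_k)=1/a_k$, and $\Delta=\sum_{k\ge1}\ind{E_k}$ (which is finite $\PP'$-a.s. by~\eqref{eq:Sigma_is_finite} and Borel--Cantelli). Since $a_k^2=b_k\ge2$ forces $a_k\ge2$, each factor $1-2/a_k$ lies in $[0,1)$, and one has $\Sigma=\sum_{k\ge1}1/a_k=\EE'[\Delta]$, so intuitively the parity of $\Delta$ should equidistribute as $\Sigma$ grows.

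The key steps, in order, are: \textbf{(1)} compute $\EE'[(-1)^{\Delta_\ell}]=\prod_{k=1}^\ell(1-2/a_k)$ for the partial counts $\Delta_\ell$ by independence, using $\EE'[(-1)^{\ind{E_k}}]=1-2/a_k$; \textbf{(2)} pass to the limit $\ell\to\infty$: since $\Delta_\ell\to\Delta<\infty$ $\PP'$-a.s., the bounded sequence $(-1)^{\Delta_\ell}$ converges $\PP'$-a.s. to $(-1)^\Delta$, so dominated convergence yields $\EE'[(-1)^\Delta]=\prod_{k\ge1}(1-2/a_k)$; \textbf{(3)} since $(-1)^\Delta$ takes only the values $\pm1$, write $\PP'((-1)^\Delta=1)=\EE'[\tfrac12(1+(-1)^\Delta)]=\tfrac12\bigl(1+\prod_{k\ge1}(1-2/a_k)\bigr)$; \textbf{(4)} bound the product using $1-x\le e^{-x}$ together with $1-2/a_k\ge0$ to get $0\le\prod_{k\ge1}(1-2/a_k)\le e^{-2\Sigma}$. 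Combining these, $\bigl|\PP'((-1)^\Delta=1)-\tfrac12\bigr|\le\tfrac12 e^{-2\Sigma}$, which goes to $0$ as $\Sigma\to\infty$.

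This argument is essentially routine; the only points requiring a little care are the justification of the dominated-convergence step in~(2) --- which is exactly where hypothesis~\eqref{eq:Sigma_is_finite} is used, guaranteeing $\Delta<\infty$ a.s. so that $(-1)^\Delta$ is well defined --- and the observation that the degenerate case $a_k=2$ (which makes the product vanish identically) is harmless, the stated inequality still holding. The quantitative bound $\tfrac12 e^{-2\Sigma}$ also makes precise the meaning of the limit ``$\Sigma\to\infty$'': the deviation of $\PP'((-1)^\Delta=1)$ from $1/2$ is controlled solely by $\Sigma$, uniformly over all admissible choices of the sequence $(a_k)$.
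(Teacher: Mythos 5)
Your proof is correct and follows essentially the same route as the paper's: compute $\EE'[(-1)^{\Delta}]=\prod_{k\ge1}(1-2/a_k)$ by independence, bound it by $e^{-2\Sigma}$ via $1-x\le e^{-x}$, and convert the expectation into $2\PP'((-1)^{\Delta}=1)-1$. The only difference is that you justify the factorization over the infinite product carefully through the partial counts $\Delta_\ell$ and dominated convergence, a step the paper performs implicitly.
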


\begin{proof}
 Since $\PP'\bigl(\omega'_k=0\bigr)=1/a_k$, denoting by $\EE'[\,\cdot\,]$ the expectation with respect to the probability $\PP'$, we have
\[
 \EE' \left[ (-1)^{\ind{\omega'_k=0}}\right] = 1-\dfrac{2}{a_k}.
\]
Then, by independence of the coordinates under $\PP'$, using the inequality $\ln(1+x)\le x$ for $-1<x<1$, we get
\begin{align*}
 \EE' \left[ (-1)^{\Delta}\right] 
    & =  \EE' \left[ (-1)^{\sum_{k\ge 1}\ind{\omega'_k=0}}\right] \\
    & = \prod_{k\ge 1}\EE' \left[ (-1)^{\ind{\omega'_k=0}}\right] \\
    & = \prod_{k\ge 1} \left( 1-\dfrac{2}{a_k} \right)\\
    & \le \exp(-2\Sigma).
\end{align*}
The proposition follows, since the above expectation is also equal to 
\[  
2\PP'\Bigl( (-1)^{\Delta}=1 \Bigr) -1.
\]
\end{proof}

\begin{theo}
\label{thm:Sigma_to_infty}
We have the weak convergence
  \[
    \nup\tend{\Sigma}{\infty}  \beta.
  \]
\end{theo}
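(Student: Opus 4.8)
The plan is to show that for every cylinder set $C\subset Y$, $\nup(C)\to\beta(C)$ as $\Sigma\to\infty$. Fix a finite set $B\subset\NNS$ and signs $(\alpha_m)_{m\in B}$, and let $C=\{y:\forall m\in B,\ y_m=\alpha_m\}$. By definition of $\nup$ as the pushforward of $\PP'$ under $\psi$, we have $\nup(C)=\PP'\bigl(\forall m\in B,\ (-1)^{\Delta(T'^m\underline{\omega'})}=\alpha_m\bigr)$, where under $\PP'$ the coordinates $\omega'_k$ are independent and uniform on $\ZZ/a_k\ZZ$. For each $m$, $\Delta(T'^m\omega')=\sum_{k\ge1}\ind{\omega'_k+m=0\,[a_k]}=\sum_{k\ge1}\ind{\omega'_k=-m\,[a_k]}$. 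The key structural observation is that for distinct $m_1,m_2\in B$ and a fixed $k$, the events $\{\omega'_k=-m_1\,[a_k]\}$ and $\{\omega'_k=-m_2\,[a_k]\}$ are disjoint once $a_k>\sup B-\inf B$, i.e. for all but finitely many $k$; so a single coordinate $\omega'_k$ contributes at most one indicator across the whole family. Splitting off the finitely many small $a_k$ (whose contribution is controlled and, since $\Sigma\to\infty$ forces more and more indices in, can be handled separately) one is left with a family of \emph{independent} contributions indexed by the large $k$, where the vector $\bigl(\ind{\omega'_k+m=0\,[a_k]}\bigr)_{m\in B}$ takes the value $e_m$ (the $m$-th basis vector) with probability $1/a_k$ each, and $0$ with probability $1-|B|/a_k$.

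The heart of the argument is then a Fourier/characteristic-function computation. Writing $\epsilon_m\egdef(1-\alpha_m)/2\in\{0,1\}$, we have $\ind{(-1)^{\Delta(T'^m\omega')}=\alpha_m}=\tfrac12\bigl(1+\alpha_m(-1)^{\Delta(T'^m\omega')}\bigr)$, so $\nup(C)=2^{-|B|}\sum_{E\subset B}\Bigl(\prod_{m\in E}\alpha_m\Bigr)\EE'\Bigl[(-1)^{\sum_{m\in E}\Delta(T'^m\omega')}\Bigr]$. For $E=\emptyset$ the term is $1$; the claim $\nup(C)\to2^{-|B|}=\beta(C)$ is therefore equivalent to showing $\EE'\bigl[(-1)^{\sum_{m\in E}\Delta(T'^m\omega')}\bigr]\to0$ for every nonempty $E\subset B$. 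For such an $E$, $\sum_{m\in E}\Delta(T'^m\omega')=\sum_{k\ge1}\sum_{m\in E}\ind{\omega'_k=-m\,[a_k]}$, and by independence across $k$,
\[
\EE'\Bigl[(-1)^{\sum_{m\in E}\Delta(T'^m\omega')}\Bigr]=\prod_{k\ge1}\EE'\Bigl[(-1)^{\sum_{m\in E}\ind{\omega'_k=-m\,[a_k]}}\Bigr].
\]
For $k$ large enough that the residues $-m\,[a_k]$, $m\in E$, are pairwise distinct, the exponent is itself a $\{0,1\}$-valued indicator equal to $1$ with probability $|E|/a_k$, so the $k$-th factor equals $1-2|E|/a_k$. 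Hence, up to the finite product over the small $k$ (each factor of which lies in $[-1,1]$ and in particular is bounded in absolute value by $1$), $\bigl|\EE'[(-1)^{\sum_{m\in E}\Delta(T'^m\omega')}]\bigr|\le\prod_{k:\,a_k\text{ large}}\bigl(1-\tfrac{2|E|}{a_k}\bigr)\le\exp\bigl(-2|E|\sum_{k:\,a_k\text{ large}}1/a_k\bigr)$, using $\ln(1+x)\le x$ exactly as in the proof of Proposition~\ref{prop:to1/2}. Since $|E|\ge1$ and $\sum_{k:\,a_k\text{ large}}1/a_k=\Sigma-(\text{finite correction})\to\infty$ as $\Sigma\to\infty$, this bound tends to $0$, which is what we wanted.

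I would organize the write-up as: (1) reduce to showing $\nup(C)\to2^{-|B|}$ on cylinders; (2) expand via the $\pm1$-Fourier identity to reduce to $\EE'[(-1)^{\sum_{m\in E}\Delta(T'^m)}]\to0$ for nonempty $E$; (3) factor over $k$ by independence, identify the small-$k$ and large-$k$ regimes, and apply the $\exp(-2|E|\cdot(\text{large-}k\text{ tail of }\Sigma))$ estimate. The main obstacle — really the only subtlety — is bookkeeping around the finitely many "small" $a_k$: for a \emph{fixed} $\B$ and a \emph{fixed} $B$ the threshold beyond which the residues $-m\,[a_k]$ are distinct is fixed, but the statement is about a limit over a varying family $\B$ with $\Sigma\to\infty$, so one must phrase the estimate uniformly, e.g. noting that at most $\#\{k:a_k\le\sup B-\inf B\}$ factors are "bad", each contributes a factor bounded by $1$ in absolute value, and the remaining (good) factors are all of the form $1-2|E|/a_k\in(-1,1]$, whose product is $\le\exp(-2|E|\sum_{\text{good }k}1/a_k)$ — and $\sum_{\text{good }k}1/a_k\ge\Sigma-\#\{\text{bad }k\}/2\to\infty$. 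Once this uniformity is spelled out the convergence is immediate, and no genericity input beyond the already-proven Theorem~\ref{thm:pi_generic} (which supplies the measure $\nup$) is needed.
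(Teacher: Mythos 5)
Your argument is correct, but it takes a genuinely different route from the paper's. The paper proves the convergence by induction on the length $m$ of the cylinder: it conditions on the random sets $K_j=\{k:\omega'_k+j=0\ [a_k]\}$ for $j<m$, bounds the conditional expectation of $(-1)^{\Delta\circ T'^m}$ by $\exp(-2F)$ for a \emph{random} quantity $F=\sum_{k\notin K_1\cup\cdots\cup K_{m-1}}1/(a_k-m+1)$, and then needs a separate concentration argument (two-sided first-moment bounds leading to $\PP'(F>\varepsilon\Sigma)\ge(1-2\varepsilon)/(1+\varepsilon)$) to show $F$ is large with high probability. Your Fourier expansion $\ind{y_m=\alpha_m}=\tfrac12(1+\alpha_m y_m)$ reduces everything to the unconditional correlations $\EE'[(-1)^{\sum_{m\in E}\Delta\circ T'^m}]$, which factor exactly over the independent coordinates $\omega'_k$; the disjointness of the residues $-m\ [a_k]$ for $a_k$ beyond a threshold depending only on $B$ then gives the deterministic factor $1-2|E|/a_k$ and the bound $\exp(-2|E|(\Sigma-c_B))$. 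This sidesteps both the induction and the concentration step for $F$, and as a bonus yields an explicit rate of convergence in $\Sigma$, uniform over all families with the same value of $\Sigma$. You have correctly identified the one real subtlety (uniformity of the "bad $k$" count over the varying family $\B$) and resolved it; the only loose end is that your "good" threshold should also require $a_k>2|E|$, since for $a_k\le 2|E|$ the factor $1-2|E|/a_k$ is negative and its absolute value need not be bounded by $e^{-2|E|/a_k}$ --- but enlarging the finite bad set to $\{k:a_k\le\max(\sup B,2|B|)\}$ fixes this at no cost, exactly as in your own bookkeeping remark.
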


\begin{proof}
For $m\ge1$, let us call an \emph{$m$-cylinder} any cylinder subset of $Y$ of the form
\[
  C = \left\{y\in Y: \forall 1\le j\le m,\ y_j=\alpha_j\right\}
\]
for a fixed choice of $(\alpha_1,\ldots,\alpha_m)\in\{-1,1\}^{m}$.
We have to prove that, for any $m\ge1$, and any $m$-cylinder $C$,
\begin{equation}
 \label{eq:weak-convergence}
 \lim_{\Sigma\to\infty}\nup\left(C \right) 
= \left( \dfrac{1}{2}\right)^{m}.
\end{equation}
We will prove this by induction on $m$, observing that Proposition~\ref{prop:to1/2} already gives the above convergence when $m=1$.
Assuming~\eqref{eq:weak-convergence} holds for $(m-1)$-cylinders, we will get it for $m$-cylinders by showing that the $\nup$-distribution of $y_m$ conditioned on $(y_1=\alpha_1,\ldots,y_{m-1}=\alpha_{m-1})$ can be made arbitrarily close to the uniform distribution on $\{-1,1\}$ if $\Sigma$ is large enough. But, for a random variable taking values in $\{-1,1\}$,  saying that its distribution is close to the uniform distribution on $\{-1,1\}$ amounts to saying that its expected value is close to 0. Hence, translated into the probability space $(\Omega',\PP')$, what we want to prove is equivalent to
\begin{equation}
 \label{eq:conditional_probability}
\lim_{\Sigma\to\infty} \EE' \left[ (-1)^{\Delta\circ T'^m} \ \left|\ (-1)^{\Delta\circ T'}=\alpha_1,\ldots,(-1)^{\Delta\circ T'^{m-1}}=\alpha_{m-1}   \right. \right] = 0.
\end{equation}

Recall that we assumed the validity of~\eqref{eq:weak-convergence} for $(m-1)$-cylinders. Hence we can take $\Sigma$ large enough so that 
\[
  \PP'\left(\ (-1)^{\Delta\circ T'}=\alpha_1,\ldots,(-1)^{\Delta\circ T'^{m-1}}=\alpha_{m-1}\right) \ge \dfrac{1}{2^{m}}.
\]
Then, \eqref{eq:conditional_probability} is equivalent to
\begin{equation}
 \label{eq:conditional_probability2}
  \EE' \left[ (-1)^{\Delta\circ T'^m} \ind{(-1)^{\Delta\circ T'}=\alpha_1,\ldots,(-1)^{\Delta\circ T'^{m-1}}=\alpha_{m-1}} \right] \tend{\Sigma}{\infty} 0.
\end{equation}

For $j\in\NNS$, we introduce the random subset $K_j=K_j(\omega')\subset \{1,2,3,\ldots\}$, defined as follows:
\[
 K_j \egdef\ \bigl\{k\ge1: \omega'_k + j = 0\ [a_k]\bigr\},
\]
so that $\Delta\circ T'^j=|K_j|$. Conditioning with respect to $(-1)^{\Delta\circ T'^j}$, $1\le j\le m-1$ is not convenient: It is more appropriate to condition on a more precise information, namely the knowledge of $K_1,\ldots,K_{m-1}$.

Observe that the information provided by $K_1,\ldots,K_{m-1}$ is precisely the following: For each $k\ge 1$ and each $1\le j\le m-1$, we know whether $\omega'_k+j=0\ [a_k]$ or not. Then, with respect to the conditional probability given  $K_1,\ldots,K_{m-1}$, we have the three following properties:
\begin{itemize}
  \item The coordinates $\omega'_k$, $k\ge1$ remain independent.
  \item If $k\in K_j$ for some $1\le j\le m-1$, since we know that $\omega'_k+j = 0\ [a_k]$, then either $j=m\ [a_k]$, and we have $\omega'_k+m = 0\ [a_k]$, or $j\neq m\ [a_k]$, and we have $\omega'_k+m \neq 0\ [a_k]$. In either case, we have
  \[
    \left| \EE'\left[ (-1)^{\ind{\omega'_k+m=0\ [a_k]}} \bigl| K_1,\ldots,K_{m-1} \right] \right| = 1.
  \]

  \item If $k\not \in K_1\cup\cdots\cup K_{m-1}$, then we know that $\omega'_k+j\neq 0\ [a_k]$ for each $1\le j\le m-1$. Hence $a_k>m-1$, and $\omega'_k$ is uniformly distributed on $\ZZ/a_k\ZZ\setminus\{a_k-1,\ldots,a_k-(m-1)\}$.  We then have 
  \begin{equation}
    \label{eq:K_m}
    \PP'\bigl( k\in K_m | K_1,\ldots,K_{m-1} \bigr) = \dfrac{1}{a_k-m+1},
  \end{equation}
and
  \[
    \EE'\left[ (-1)^{\ind{\omega'_k+m=0\ [a_k]}} \bigl| K_1,\ldots,K_{m-1} \right] = 1-\dfrac{2}{a_k-m+1}.
  \]
  \end{itemize}
By the same argument as in the proof of Proposition~\ref{prop:to1/2}, we get
\begin{align}
  \nonumber\left| \EE'\left[ (-1)^{\Delta\circ T'^m} \bigl| K_1,\ldots,K_{m-1} \right]  \right| 
  & =  \prod_{\stack{k\ge1}{k\not\in K_1\cup\cdots\cup K_{m-1}}} \left(1-\dfrac{2}{a_k-m+1}\right)  \\
  & \le \exp(-2F), 
  \label{eq:ineq_F}
\end{align}
where $F$ is the random variable defined by
\[
  F \egdef\ \sum_{\stack{k\ge1}{k\not\in K_1\cup\cdots\cup K_{m-1}}} \dfrac{1}{a_k-m+1}
\]
(the randomness of $F$ coming from the role of the random sets $K_j$).
Now, we have to prove that, if $\Sigma$ is large, then $F$ is large with probability $\PP'$ close to~1.

Fix some small number $\varepsilon>0$, and a number $M$ large enough so that $\frac{2}{M-1}<\varepsilon$. We have 
\begin{align*}
  F & \le \sum_{\stack{k\ge1}{a_k>m-1}} \dfrac{1}{a_k-m+1}\\
    & = \sum_{\stack{k\ge1}{m-1<a_k\le Mm}} \dfrac{1}{a_k-m+1} + \sum_{\stack{k\ge1}{a_k>Mm}} \dfrac{1}{a_k-m+1}.
\end{align*}
The first sum can be bounded by $\sum_{1\le \ell\le Mm}1/\ell$, which is less than $\Sigma/(M-1)$ if $\Sigma$ is large enough. In the second sum, we have $a_k-m+1\ge a_k(M-1)/M $, hence this second sum can be bounded by $\Sigma M/(M-1)$. We then get, if $\Sigma$ is large enough,
\begin{equation}
  \label{eq:bound_for_F}
  F\le \dfrac{M+1}{M-1} \Sigma \le (1+\varepsilon) \Sigma.
\end{equation}

On the other side, by~\eqref{eq:K_m} we can interpret $F$ as
\[
  F = \EE'\Bigl[ \,|K_m\setminus K_1\cup\cdots\cup K_{m-1}|\ \bigl|K_1,\ldots,K_{m-1} \Bigr].
\]
Observe that, if $k\in K_m \cap (K_1\cup\cdots\cup K_{m-1})$, then $\omega'_k+m=0\ [a_k]$, and there exists also $1\le j\le m-1$ with $\omega'_k+j=0\ [a_k]$. It follows that $j=m\ [a_k]$, and necessarily $a_k\le m$. Since the integers $a_k$ are distinct, we deduce that
\[
 \Bigl| K_m\cap (K_1\cup\cdots\cup K_{m-1}) \Bigr| \le m,
\]
and it follows that 
\[
  F \ge \EE'\Bigl[ \,|K_m|\ \bigl|K_1,\ldots,K_{m-1} \Bigr] - m.
\]
Therefore, provided $\Sigma$ is large enough,
\begin{equation}
  \label{eq:expectation_of_F}
  \EE'[F] \ge \EE'\bigl[ \,|K_m|\, \bigr] -m = \Sigma - m \ge (1-\varepsilon)\,\Sigma.
\end{equation}
Then, using~\eqref{eq:bound_for_F}, we get
\begin{align*}
  (1-\varepsilon)\,\Sigma &\le \EE'[F] \\
    & = \EE'\left[ F \ind{F\le \varepsilon\Sigma} \right] + \EE'\left[ F \ind{F > \varepsilon\Sigma} \right] \\
    & \le \varepsilon\,\Sigma + (1+\varepsilon)\,\Sigma\ \PP'(F>\varepsilon\Sigma),
\end{align*}
and finally
\begin{equation}
  \label{eq:F_large}
  \PP'(F>\varepsilon\Sigma) \ge \dfrac{1-2\varepsilon}{1+\varepsilon}.
\end{equation}

Now, let us come back to our goal, which is to establish~\eqref{eq:conditional_probability2}. 
If $\Sigma$ is large enough, we have
\begin{align*}
  &\left| \EE' \left[ (-1)^{\Delta\circ T'^m} \ind{(-1)^{\Delta\circ T'}=\alpha_1,\ldots,(-1)^{\Delta\circ T'^{m-1}}=\alpha_{m-1}} \right] \right| \\
  & = \left| \EE' \left[ \EE' \bigl[(-1)^{\Delta\circ T'^m} \bigl| K_1,\ldots,K_{m-1}\bigr] \ind{(-1)^{\Delta\circ T'}=\alpha_1,\ldots,(-1)^{\Delta\circ T'^{m-1}}=\alpha_{m-1}} \right] \right| \\
  & \le \EE'\left[ \, \bigl|\EE' \bigr[(-1)^{\Delta\circ T'^m} \bigl| K_1,\ldots,K_{m-1}\bigr] \bigr| \, \right]\\
  & \le \EE'\left[ \exp(-2F) \right] \qquad\text{by \eqref{eq:ineq_F}}\\
  & \le \exp(-2\varepsilon\Sigma) + \PP'(F\le\varepsilon\Sigma).
\end{align*}
By~\eqref{eq:F_large}, the second term can be made as small as we want by taking $\varepsilon$ small enough, then the first term can also be arbitrarily small by taking $\Sigma$ large enough.
\end{proof}

In view of Theorem~\ref{thm:Sigma_to_infty}, we are led to formulate the following conjecture which, by Theorem~\ref{thm:chowla and genericity}, would imply the validity of the Chowla conjecture.

\begin{conjecture}
  When $\Sigma=\infty$, the sequence $\pi$ is generic for $\beta$.
\end{conjecture}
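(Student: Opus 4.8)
\emph{A natural strategy, and the essential difficulty.} Since the statement is a conjecture, we only outline the line one would follow and isolate the point at which the soft methods of the previous sections break down.

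The first, routine, step is to reduce genericity of $\pi$ for $\beta$ to the vanishing of all short correlations. By Fourier inversion on the finite group $\{-1,1\}^m$, the convergence \eqref{eq:pi is beta generic} holds for every cylinder set if and only if, for every $q\ge1$ and all integers $0\le s_1<\cdots<s_q$,
\begin{equation*}
  \frac1N\sum_{0\le n<N}\pi_{n+s_1}\cdots\pi_{n+s_q}\tend{N}{\infty}0.
\end{equation*}
Writing $\epsilon_k(n)\egdef(-1)^{\ind{a_k\mid n}}$, we have $\pi_n=\prod_{k\ge1}\epsilon_k(n)$, a product which for each fixed $n$ has all but finitely many factors equal to $1$; hence $\pi_{n+s_1}\cdots\pi_{n+s_q}=\prod_{k\ge1}\zeta_k(n)$, where $\zeta_k(n)\egdef(-1)^{\#\{j:\,a_k\mid n+s_j\}}$ depends only on $n\bmod a_k$.

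The second step is to check that the conjectured value is forced on every finite truncation. For $\ell\ge1$, put $\pi^{(\ell)}_n\egdef\prod_{k\le\ell}\epsilon_k(n)$; this is exactly the sequence $\pi$ attached to the finite set $\{a_1^2,\dots,a_\ell^2\}$, for which \eqref{eq:Sigma_is_finite} holds trivially, so Theorem~\ref{thm:pi_generic} applies, and moreover $\pi^{(\ell)}$ is periodic of period $a_1\cdots a_\ell$. Averaging over one period and invoking the Chinese Remainder Theorem as in Lemma~\ref{lemma:chinese}, one gets
\begin{equation*}
  \frac1N\sum_{0\le n<N}\pi^{(\ell)}_{n+s_1}\cdots\pi^{(\ell)}_{n+s_q}\tend{N}{\infty}\prod_{k\le\ell}c_k,\qquad c_k\egdef\frac1{a_k}\sum_{r=0}^{a_k-1}(-1)^{\#\{j:\,a_k\mid r+s_j\}}.
\end{equation*}
Here $|c_k|\le1$ always, and $c_k=1-2q/a_k$ as soon as $a_k>s_q-s_1$, since then the residues $-s_1,\dots,-s_q$ are pairwise distinct modulo $a_k$. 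As $\Sigma=\sum_k1/a_k=\infty$, the tail of $\prod_kc_k$ consists of factors in $(0,1)$ whose complements sum to $2q\Sigma=\infty$, so $\prod_{k\le\ell}c_k\tend{\ell}{\infty}0$. (The same conclusion is built into the proof of Theorem~\ref{thm:Sigma_to_infty}, which shows that the measure for which $\pi^{(\ell)}$ is generic tends weakly to $\beta$.) Thus the conjecture is consistent with everything established so far.

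The genuine obstacle is the passage from $\pi^{(\ell)}$ to $\pi$. Along our orbit one has $\pi_n=\pi^{(\ell)}_n\cdot(-1)^{\#\{k>\ell:\,a_k\mid n\}}$, so $\pi$ and $\pi^{(\ell)}$ differ only on the set $\mathcal E_\ell\egdef\{n:\exists\,k>\ell,\ a_k\mid n\}$. When $\Sigma<\infty$ this set has upper density at most $\sum_{k>\ell}1/a_k\to0$, which is precisely the mechanism behind \eqref{eq:error_term} and \eqref{eq:error_term_lambda}, and what makes Theorems~\ref{thm:eta_generic} and~\ref{thm:pi_generic} ``soft''. When $\Sigma=\infty$, however, coprimality of the $a_k$ forces $\mathcal E_\ell$ to have density $1-\prod_{k>\ell}(1-1/a_k)=1$ for every $\ell$: no finite truncation approximates $\pi$, and a density estimate can never close the gap. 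What is really needed is honest cancellation in the tail --- a quantitative form of the heuristic that, along the orbit of $\underline{\omega'}$, the factors $\zeta_k$ with $k$ large behave like independent random signs and decorrelate from the finitely many small ones. We expect this to be the crux, and a hard one: in the Möbius case ($a_k$ the primes, $\pi_n=(-1)^{\omega(n)}$) the case $q=1$ already has the strength of the prime number theorem, while $q\ge2$ lies squarely in Chowla--Sarnak territory --- indeed, by Theorem~\ref{thm:chowla and genericity}, the conjecture implies the Chowla conjecture. A proof would presumably require either an analytic input of multiplicative-number-theory type (available, if at all, only in the multiplicative Möbius case), or a genuinely new dynamical mechanism able to handle a non-summable tail, well beyond the soft arguments of this paper, which at every stage relied on the summability in \eqref{eq:finite_sum_of_inverse} or in \eqref{eq:Sigma_is_finite}.
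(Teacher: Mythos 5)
This statement is labelled a conjecture in the paper, and the paper offers no proof of it --- only the remark that, by Theorem~\ref{thm:chowla and genericity}, it would imply the Chowla conjecture. You were right not to manufacture one, and your analysis is sound: the Walsh-expansion reduction of genericity for $\beta$ to the vanishing of the correlations $\frac1N\sum_{0\le n<N}\pi_{n+s_1}\cdots\pi_{n+s_q}$ is correct; the truncated correlations do converge to $\prod_{k\le\ell}c_k$ with $c_k=1-2q/a_k$ once $a_k>s_q-s_1$, and divergence of $\Sigma$ forces this product to~$0$, consistently with Theorem~\ref{thm:Sigma_to_infty}. Your diagnosis of the obstruction is also the right one: every error estimate in the paper (\eqref{eq:error_term}, \eqref{eq:error_term_short_intervals}, \eqref{eq:error_term_lambda}) controls the exceptional set by a tail sum of reciprocals, whereas for $\Sigma=\infty$ the set where $\pi$ and its truncation $\pi^{(\ell)}$ differ has full density for every $\ell$, so no density argument can close the gap and genuine cancellation in the non-summable tail is required --- which, in the M\"obius case, is exactly the Chowla--Sarnak difficulty. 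There is nothing to correct; just make sure the write-up presents itself as a discussion of the conjecture rather than a proof.
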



\bibliography{mobius}

\end{document}